\documentclass[final,onefignum,onetabnum]{siamonline171218}

\newsavebox{\mybox} 
\usepackage{tikz}
\newcommand*{\circled}[1]{\lower.7ex\hbox{\tikz\draw (0pt, 0pt)%
		circle (.5em) node {\makebox[1em][c]{\small #1}};}}
\newtheorem{example}{Example}[section]

\usepackage{algorithmic}
\usepackage[algo2e,linesnumbered,lined,ruled]{algorithm2e} %
\usepackage{enumitem} %
\setlist[enumerate]{leftmargin=.5in}
\setlist[itemize]{leftmargin=.5in}
\usepackage{setspace}
\singlespacing

\newcommand{\reffig}[1]{Fig.\,\ref{#1}}
\newcommand{\reffigs}[1]{Figs.\,\ref{#1}}
\newcommand{\reftab}[1]{Tab.\,\ref{#1}}
\newcommand{\reftabs}[1]{Tabs.\,\ref{#1}}
\newcommand{\refalg}[1]{Algorithm\,\ref{#1}}
\newcommand{\refalgs}[1]{Algorithms\,\ref{#1}}
\newcommand{\refrmk}[1]{Remark\,\ref{#1}}

\usepackage{cite} %
 %
\usepackage{mathrsfs}
\usepackage{amsmath,amssymb,amsfonts} %
\usepackage{cases} %
\usepackage{graphicx,subfigure}
\usepackage{epstopdf} %
\usepackage{bm}
\usepackage{setspace}
\usepackage{multirow}
\usepackage{booktabs} 
\usepackage{extarrows}
\usepackage{relsize}
\usepackage[section]{placeins} 
\usepackage{xcolor,float} 
\makeatletter
\newcommand{\rmnum}[1]{\romannumeral #1}
\newcommand{\Rmnum}[1]{\expandafter\@slowromancap\romannumeral #1@}
\newtheorem{remark}{Remark}

\makeatother
\textwidth=15.8cm\textheight=24cm\voffset=-1.0cm\hoffset=7mm
\graphicspath{{fig/}}

\makeatletter
\newcommand*\bigcdot{\mathpalette\bigcdot@{.5}}
\newcommand*\bigcdot@[2]{\mathbin{\vcenter{\hbox{\scalebox{#2}{$\m@th#1\bullet$}}}}}
\makeatother

\title{Parallel Multi-Stage Preconditioners with Adaptive Setup for the Black Oil Model\thanks{This is a preprint manuscript, which is submitted to Computers and Geosciences Journal.}
}

\author{Li Zhao\thanks{School of Mathematics and Computational Science, Xiangtan University, Xiangtan, Hunan 411105, P. R. China} 
	\and Chunsheng Feng\thanks{National Center for Applied Mathematics in Hunan, Xiangtan University, Xiangtan 411105, P. R. China; Hunan Shaofeng Institute for Applied Mathematics, Xiangtan 411105, P. R. China.}
	\and Chensong Zhang\thanks{Academy of Mathematics and System Sciences, Beijing 100190, P. R. China. (Corresponding author: Chensong Zhang, Email: \email{zhangcs@lsec.cc.ac.cn}).}
	\and Shi Shu\footnotemark[2]
}

\date{\currenttime}
\begin{document}		
\maketitle 

\newcommand{\slugmaster}{%
\slugger{sifin}{}{}{}{}}
\def\figurename{{Fig.}}
\def\tablename{{Tab.}}

\begin{abstract}
The black oil model is widely used to describe multiphase porous media flow in the petroleum industry. The fully implicit method features strong stability and weak constraints on timestep sizes; hence, it is commonly used in current mainstream commercial reservoir simulators. In this paper, a Constrained Pressure Residual (CPR) preconditioner with an adaptive ``setup phase" is developed to improve the parallel efficiency of a petroleum reservoir simulation. Furthermore, we propose a multi-color Gauss--Seidel (GS) algorithm for the algebraic multigrid method based on the coefficient matrix of strong connections. Numerical experiments show that the proposed preconditioner can improve the parallel performance for both OpenMP and Compute Unified Device Architecture (CUDA) implements. Moreover, the proposed algorithm yields good parallel speedup as well as the same convergence behavior as the corresponding single-thread algorithm. In particular, for a three-phase benchmark problem (about 3.28 million degrees of freedom), the parallel speedup of the OpenMP version is over 6.5 with 16 threads, and the CUDA version reaches more than 9.5.
\end{abstract}
\begin{keywords}
	Black oil model; fully implicit method; parallel computing; multi-color Gauss-Seidel smoother; multi-stage preconditioners.
\end{keywords}

\begin{AMS}
49M20, 65F10, 68W10, 76S05
\end{AMS}

\pagestyle{myheadings}
\thispagestyle{plain}
\markboth{Parallel Multi-Stage Preconditioners with Adaptive Setup for the Black Oil Model}
{Li Zhao, Chunsheng Feng, Chensong Zhang and Shi Shu}

\section{Introduction}\label{sec:1}
Research on petroleum reservoir simulation can be traced back to the 1950s. To describe and predict the transportation of hydrocarbons, various mathematical models have been established, such as the black oil model, compositional model, thermal recovery model, and chemical flooding model~\cite{Aziz1979PetroleumRS,ChenHuanMa,GOUDARZI201696,Peaceman1977,VALIOLLAHI2012}. The black oil model consists of multiple coupled nonlinear partial differential equations (PDEs). It is a fundamental mathematical model to describe the three-phase flow in petroleum reservoirs and is widely used in simulating primary and secondary recovery.

After 70 years of development, there is a large body of research on the numerical methods of the black oil model, including the Simultaneous Solution (SS) method \cite{1959One}, Fully Implicit Method (FIM) \cite{DouglasFIMI}, IMplicit Pressure Explicit Saturation (IMPES) method \cite{StoneIMPES}, and Adaptive Implicit Method (AIM) \cite {AIM}.
Compared with other methods, FIM is commonly used in mainstream commercial reservoir simulators because of its unconditional stability with respect to timestep sizes. However, a coupled Jacobian linear algebraic system needs to be solved in each Newton iteration step. Owing to the complexity of the practical engineering problem, such systems are difficult to solve with traditional linear solvers. In the reservoir simulation, the solution time of Jacobian systems easily occupies more than 80\% of the whole simulation time. Therefore, how to efficiently solve coupled Jacobian systems, especially on modern computers, is a problem that still attracts a lot attention today.

Typically, linear solution methods can be divided into two phases, the ``setup phase" (SETUP) and the ``solve phase" (SOLVE). These methods can usually be categorized as direct methods \cite{2006Direct} and iterative methods \cite{2003Iterative}. Compared with direct methods, iterative methods have the advantages of low memory and computation complexity and potentially good parallel scalability \cite{osti10249810}. The linear algebraic systems arising from fully implicit petroleum reservoir simulation are usually solved by iterative methods. In particular, Krylov subspace methods \cite{2003Iterative} (e.g., GMRES and BiCGstab) are frequently adopted. For ill-conditioned linear systems, the preconditioning technique \cite{Xu1992MSC} is needed to accelerate the convergence of the iterative methods. The preconditioners for reservoir simulation include: Incomplete LU (ILU) factorization~\cite{10.2118/12262-MS}, Algebraic MultiGrid (AMG)~\cite{1984Algebraic,falgout2006an}, Constrained Pressure Residual (CPR)~\cite{10.2118/96809-MS,2017Numerical,IGE1983,10.2118/13536-MS}, and Multi-Stage Preconditioner (MSP)~\cite{10.2118/118722-MS, XiaoZheHU2013,10.2118/105832-MS}. 
The ILU method is relatively easy to implement, but as the problem size increases, its convergence deteriorates. The advantages of the AMG method are that it is easy to use and effective on elliptic problems. Owing to the asymmetry, heterogeneity, and nonlinear coupling feature of petroleum reservoir problems, the performance of the AMG method also deteriorates. The CPR method combines the advantages of ILU and AMG, and the MSP method is a generalization of CPR. 

As multi-core and many-core architectures have become more popular, parallel computing for petroleum reservoir simulation is now a subject of great interest. In recent years, there has been some work on parallel algorithms for reservoir problems~\cite{BUCKER2008,Dogru2009,2014A2,MESBAH2019574,2010High,WEI2015,WILKINS2021,2014A1,ShuhongWu2016,YangBo2016,YANG20192}, and the references therein. For example, \cite{2014A2} designed an OpenMP parallel algorithm with high efficiency and a low memory cost for standard interpolation and coarse grid operator of AMG, under the framework of Fast Auxiliary Space Preconditioning (FASP, \url{http://www.multigrid.org/fasp/}). \cite{2014A1,ShuhongWu2016} developed a Method of Subspace Correction (MSC) based on \cite{2014A2} and realized a cost-effective OpenMP parallel reservoir numerical simulation. \cite{2010High} designed a GPU parallel algorithm based on the METIS  partition for the IMPES method. \cite{YangBo2016} studied the GPU parallel algorithm of ILU and AMG based on a hybrid sparse storage format.

In this paper, we focus on the solution method for the linear algebraic systems arising from the fully implicit discretization of the black oil model, aiming to improve the parallel efficiency of the CPR preconditioner. The main contributions of this work are listed as follows:

\begin{itemize}
	\item We propose an adaptive SETUP CPR preconditioner (denoted as ASCPR) to improve the efficiency and parallel performance of the solver. 
	A practical adaptive criterion is proposed to judge whether a new SETUP is necessary. The technology can bring two benefits: (1) The efficiency of the solver is improved because the number of SETUP calls can be significantly reduced; (2) the parallel performance is improved because there are many essentially sequential algorithms in the SETUP (i.e., the parallel speedup of these algorithms is low).
	
	\item We propose an efficient parallel algorithm for the Gauss-Seidel (GS) relaxation in AMG methods. Starting from the strong connections of coefficient matrix, we design an algorithm for algebraic multi-color grouping. The algorithm has two desirable features: (1) Not relying on the grid (completely transformed into algebraic behavior); (2) yielding the same convergence behavior as the corresponding single-thread algorithm. 
	Furthermore, we use an adjustable strength threshold to filter small matrix entries (enhancing the sparseness) to improve the parallel performance of the algorithm.
\end{itemize}

The rest of the paper is organized as follows. Section \ref{sec:2} introduces the black oil model and its fully implicit discrete systems. Section \ref{sec:3} reviews the CPR-type preconditioners. In Section \ref{sec:4}, an adaptive SETUP CPR preconditioner is proposed. In Section \ref{sec:5}, the parallel implementation of multi-color GS based on the coefficient matrix of strong connections is given. In Section \ref{sec:6}, numerical experiments are given. Section \ref{sec:7} provides the summary of the work of this paper.

\section{Preliminaries}\label{sec:2}
\subsection{The black oil model}\label{sec:2-1}
This paper considers the following three-phase standard black oil model of water, oil, or gas in porous media \cite{Aziz1979PetroleumRS,ChenHuanMa,Peaceman1977}. The mass conservation equations of water, oil, and gas, respectively, are

\begin{equation}\label{eq:model-1}
	\frac{\partial}{\partial t}\left(\phi \frac{S_{w}}{B_{w}}\right) = -\nabla \cdot\left(\frac{1}{B_{w}} \bm{u}_{w}\right) + \frac{Q_{W}}{B_{w}},	
\end{equation}
\begin{equation}\label{eq:model-2}
	\frac{\partial}{\partial t}\left(\phi \frac{S_{o}}{B_{o}}\right) = -\nabla \cdot\left(\frac{1}{B_{o}} \bm{u}_{o}\right) + \frac{Q_{O}}{B_{o}},	
\end{equation}
\begin{equation}\label{eq:model-3}
	\frac{\partial}{\partial t}\left[\phi\left(\frac{S_{g}}{B_{g}}+\frac{R_{so} S_{o}}{B_{o}}\right)\right] = -\nabla \cdot\left(\frac{1}{B_{g}} \bm{u}_{g}+\frac{R_{s o}}{B_{o}} \bm{u}_{o}\right) + \frac{Q_{G}}{B_{g}}+\frac{R_{so} Q_{O}}{B_{o}}.
\end{equation}

Here, $ S_{\alpha}$ is the saturation of phase $ \alpha $ ($\alpha=w,o,g $ represents the water phase, oil phase, and gas phase, respectively), $ B_{\alpha} $ is the volume coefficient of phase $ \alpha $, $ \bm{u}_{\alpha} $ is the velocity of phase $ \alpha $, $ \phi $ is the porosity of the rock, $ R_{so} $ is the dissolved gas-oil ratio, and $ Q_{\beta}$ is the injection and production rate of component $\beta $ ($\beta=W,O,G$ represents the water component, oil component, and gas component, respectively) under the ground standard status.

Assume that the three-phase fluid flow in porous media satisfies Darcy's law:
\begin{equation}\label{eq:model-4}
	\bm{u}_{\alpha}=-\frac{\kappa \kappa_{r \alpha}}{\mu_{\alpha}}\left(\nabla P_{\alpha}-\rho_{\alpha} \mathfrak{g} \nabla z\right), \quad \alpha=w, o, g,
\end{equation}
where $ \kappa $ is the absolute permeability, $ \kappa_{r \alpha} $ is the relative permeability of phase $\alpha $, $ \mu_{\alpha} $ is the viscosity coefficient of phase $ \alpha $, $ P_{\alpha }$ is the pressure of phase $ \alpha $, $ \rho_{\alpha} $ is the density of phase $ \alpha $, $ \mathfrak{g} $ is the gravity acceleration, and $ z $ is the depth.

The unknown quantities $S_{\alpha}$ and $P_{\alpha}$ in Eqs. \eqref{eq:model-1}--\eqref{eq:model-4} also satisfy the following constitutive relation:
\begin{itemize}
	\item Saturation constraint equation:
	\begin{equation}\label{eq:model-5}
		S_{w}+S_{o}+S_{g}=1.
	\end{equation}
	
	\item Capillary pressure equations:
	\begin{equation}\label{eq:model-6}
		\begin{split}
			P_w &= P_o - P_{cow}, \\
			P_g &= P_o - P_{cgo},
		\end{split}
	\end{equation}
	where $P_{cow} $ is the capillary pressure
	between the oil and water phases, and $P_{cgo} $ is the capillary pressure between the gas and oil phases.
\end{itemize}

\subsection{Discretization and algorithm flowchart}\label{sec:2-2}
The FIM scheme is currently commonly used in mainstream commercial reservoir simulators. This is because the scheme has the characteristics of strong stability and weak constraint on the timestep sizes. These characteristics highlight the advantages of the FIM, especially when the nonlinearity of the models is relatively strong.

In this paper, we use FIM to discretize the governing Eqs. \eqref{eq:model-1}--\eqref{eq:model-3}. That is, the time direction is discretized by the backward Euler method, and the spatial direction is discretized by the upstream weighted central finite difference method~\cite{ChenHuanMa,DouglasFIMI}.
After discretization, the coupled nonlinear algebraic equations are obtained. 
Such equations are linearized by adopting the Newton method to form the Jacobian system $ Ax = b $ of the reservoir equation with implicit wells, namely:
\begin{equation}\label{eq:Jacobian-system}
	\left( {\begin{array}{*{20}{c}}
			{{{{A}}_{RR}}}&{{A_{RW}}}\\
			{{A_{WR}}}&{{A_{WW}}}
	\end{array}} \right)
	\left( {\begin{array}{*{20}{c}}
			{{x_R}}\\
			{{x_W}}
	\end{array}} \right) 
	= \left( {\begin{array}{*{20}{c}}
			{{b_R}}\\
			{{b_W}}
	\end{array}} \right),
\end{equation}
where $ A_{RR} $ and $ A_{RW} $ are the derivatives of the reservoir equations for reservoir variables and well variables, respectively; $ A_{WR} $ and $ A_{WW} $ are the derivatives of the well equations for reservoir variables and well variables, respectively; $ x_R $ and $ x_W $ are reservoir and bottom-hole flowing pressure variables, respectively; and $ b_R $ and $ b_W $ are the right-hand side vectors that correspond to the reservoir fields and the implicit wells, respectively.

The subsystem corresponding to the reservoir equations in the discrete system \eqref{eq:Jacobian-system} is $ A_{RR}x_R=b_R $; that is,
\begin{equation}\label{eq:Jacobian-system-reservoir}
	\left( {\begin{array}{*{20}{c}}
			A_{PP}& A_{PS_w} & A_{PS_o}\\
			A_{S_wP}& A_{S_wS_w} & A_{S_wS_o}\\
			A_{S_oP}& A_{S_oS_w} & A_{S_oS_o}\\
	\end{array}} \right)
	\left( {\begin{array}{*{20}{c}}
			x_{P}     \\
			x_{S_w} \\
			x_{S_o} \\
	\end{array}} \right)
	= 
	\left( {\begin{array}{*{20}{c}}
			b_{P}     \\
			b_{S_w} \\
			b_{S_o}
	\end{array}} \right),
\end{equation}
where $ P, S_w$, and $S_o $ are primary variables corresponding to oil pressure, water saturation, and oil saturation, respectively.

\begin{remark}
	\rm{For convenience, we do not describe how to deal with well equations.}
\end{remark}

In the following, we present a general algorithm flowchart of the petroleum reservoir simulation; see \reffig{fig:total-flow}. 
\begin{figure}[H]
	\centering
	\includegraphics[width=0.45\linewidth]{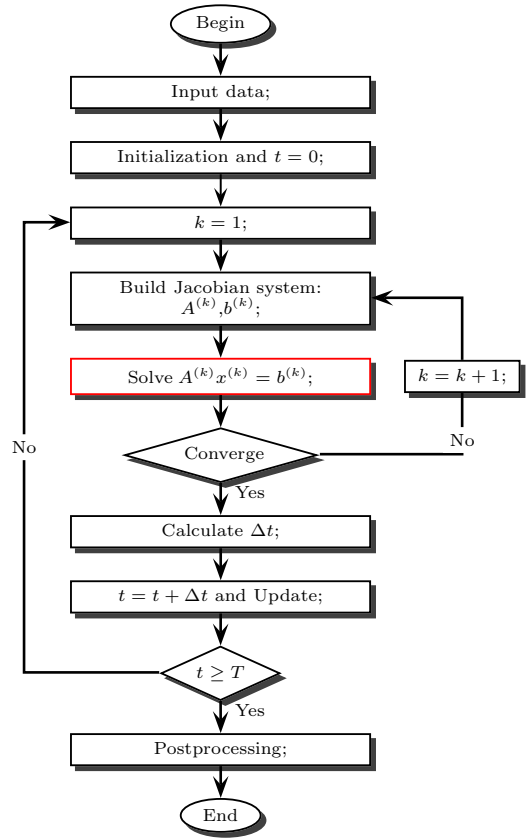}
	\caption{Algorithm flowchart of the petroleum reservoir simulation.}
	\label{fig:total-flow}
\end{figure}

According to \reffig{fig:total-flow}, the algorithm flowchart includes two loops: the outer loop (time marching) and the inner loop (Newton iterations). 
In each Newton iteration, a Jacobian system $A^{(k)} x^{(k)} = b^{(k)}$ (superscript $k$ is the number of Newton iterations) needs to be solved, which is the main computational work to be carried out.

\section{The CPR-type preconditioners}\label{sec:3}
The primary variables usually consists of oil pressure $ P $ and saturations $ S $ (including $ S_w $ and $ S_o $) in FIM, which have different mathematical properties, respectively. For example, the pressure equation is parabolic, and the saturation equation is hyperbolic \cite{Trangenstein1989}. These properties provide a theoretical basis for the design of multiplicative subspace correction methods \cite{Xu1992MSC}.

\subsection{CPR preconditioner}\label{sec:3-1}
First, the transfer operator $ \varPi_P: \mathcal{V}_P \rightarrow \mathcal{V} $ is defined, where $ \mathcal{V}_P $ and $\mathcal{V}$  are the pressure variables space and the variables space of the whole reservoir, respectively. Next, a well-known two-stage preconditioner, the Constrained Pressure Residual (CPR)~\cite{10.2118/96809-MS,2017Numerical,IGE1983,10.2118/13536-MS} preconditioner $B$, is defined as
\begin{equation}\label{eq:CPR}
	I-B A = (I-RA)(I- \varPi_P B_P \varPi_P^T A),
\end{equation}
where $B_P$ is solved by the AMG method, and the relaxation (or smoothing) operator $ R $ uses the Block ILU (BILU) method \cite{10.2118/12262-MS}.

Finally, the CPR preconditioning algorithm is shown in \refalg{alg:CPR}.

\begin{algorithm}[h]
	\caption{CPR method}\label{alg:CPR}
	\setcounter{AlgoLine}{0}
	\LinesNumbered
	\KwIn{$A, b, x$;}
	\KwOut{$x$;}
	$ r \leftarrow b-Ax $; 
	
	$ x \leftarrow x + \varPi_P B_P \varPi_P^{T}r $;
	
	$ r \leftarrow b-Ax $;
	
	$ x \leftarrow x + Rr $.
	
	\Return $ x $.
\end{algorithm}

\subsection{Red-Black GS method}\label{sec:3-2}
As know, compared with the Jacobi algorithm, the GS algorithm  uses the updated values in the iterative process. Hence, the GS algorithm obtains a better convergence rate and is widely used as a smoother of AMG. Now, the parallel red-black GS (also referred to multi-color GS) algorithm on the structured grid is fairly mature \cite{2003Iterative,feng2014numerical}. We take a 2D structured grid as an example to present two-color and four-color vertex-grouping diagrams; see \reffig{fig:sg-mc}.

\begin{figure}[H]
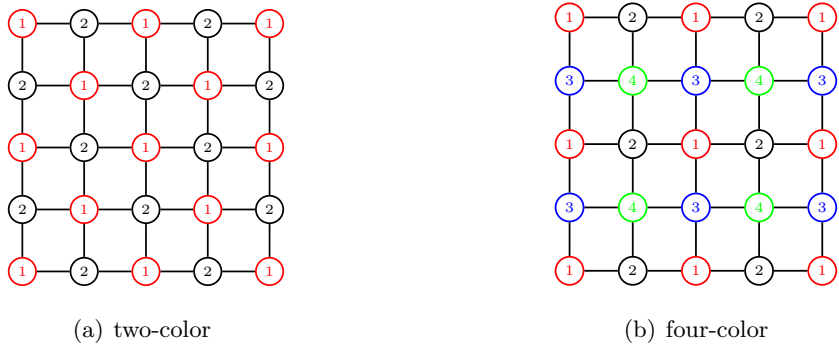

	\centering
	\subfigure[two-color]{\label{fig:sg-mc-2}
		\begin{minipage}[t]{0.45\linewidth}
			\centering
			\includegraphics[width=4cm]{sg-mc-2}		
		\end{minipage}
	}%
	\subfigure[four-color]{\label{fig:sg-mc-4}
		\begin{minipage}[t]{0.45\linewidth}
			\centering
			\includegraphics[width=4cm]{sg-mc-4}	
		\end{minipage}
	}
	\caption{Two-color and four-color vertex-grouping diagrams.}\label{fig:sg-mc}
\end{figure}

In \reffig{fig:sg-mc-2}, the vertices are divided into two groups and marked as red and black points; that is, vertex set $V$ is divided into $V_{1}$ and $V_{2}$. In \reffig{fig:sg-mc-4}, the vertices are divided into four groups, marked as red, black, blue, and green; that is, the vertex set $V$ is divided into $V_{1}$, $V_{2}$, $V_{3}$, and $V_{4}$. The multi-color GS algorithm aims to perform parallel smoothing on the vertices of the same color; that is, (1) for the case of two colors, first, all-red vertices ($V_{1}$) are smoothed in parallel, and then all-black ($V_{2}$) vertices are smoothed in parallel; (2) for the case of four colors, first, all-red vertices ($V_{1}$) are smoothed in parallel; second, all-black vertices ($V_{2}$) are smoothed in parallel; third, all-blue vertices ($V_{3}$) are smoothed in parallel; and finally, all-green vertices ($V_{4}$) are smoothed in parallel.

Note that different vertices sets are sequential, and the interior of the vertices set is entirely parallel. From the perspective of parallel effects, the above two smooth orderings can yield the same number of iterations as the sequential algorithm. From the scope of application, the two-color grouping is only applicable to the five-point stencil and the four-color grouping can be applied to the nine-point stencil. Similarly, the multi-color GS algorithm of the 2D structured grid can be extended to the 3D structured grid.

The popular parallel variant of GS is the red-black GS algorithm based on structured grids, and it is not suitable for unstructured grids. As a consequence, the application range of the algorithm is limited. Moreover, there is also a hybrid method (combining Jacobi and GS), but its convergence rate deteriorates. From the perspective of parallel implementation, the GS algorithm, an essentially sequential algorithm, is not conducive to yielding the same convergence behavior as the corresponding single-thread algorithm and obtaining high parallel efficiency at the same time.

Finally, we discuss some shortcomings of the standard CPR method. 
\begin{enumerate}
	\item [(\rmnum{1})] Petroleum reservoir simulation is a time-dependent and nonlinear problem. The Jacobian systems need to be solved in each Newton iteration step. The matrix structure of these systems is similar. The CPR method does not take full advantage of the similarity.
	
	\item [(\rmnum{2})] The CPR method contains many essentially sequential steps in the SETUP, which result in low parallel efficiency.
	
	\item [(\rmnum{3})] The GS method is commonly used as the smoother in AMG methods. When we try to improve the parallel performance of the smoother, the convergence rate of AMG methods usually deteriorates.
\end{enumerate}

In view of the shortcomings (\rmnum{1}) and (\rmnum{2}) mentioned above, we discuss how to reuse similar matrix structures to improve the performance of CPR in Section \ref{sec:4}. Furthermore, for the shortcoming (\rmnum{3}), we propose a multi-color GS method from the algebraic point of view in Section \ref{sec:5}. 

\section{An adaptive SETUP CPR preconditioner}\label{sec:4}
In this section, we propose an efficient CPR preconditioner using an adaptive SETUP strategy. For the sake of simplicity, we employ CPR as the preconditioner and restart GMRES as the iterative method (denoted as CPR-GMRES) to illustrate the fundamental idea of the ASCPR preconditioner; the corresponding algorithm is denoted as ASCPR-GMRES.
We develop ASCPR-GMRES to efficiently solve the Jacobian systems and take the right preconditioner as an example to describe its implementation; see \refalg{alg:share-pgmres} and \refalg{alg:share-setup}.

Note that the CPR preconditioner $B^{(k)}$ is generated by exploiting an adaptive strategy in \refalg{alg:share-pgmres}. The concrete implementation of the strategy is as follows; see \refalg{alg:share-setup}. 
\begin{itemize}
	\item If $ k=1 $, the preconditioner $ B^{(1)} $ is generated by calling \refalg{alg:CPR} (\refrmk{rmk:CPR});
	
	\item If $ k>1 $, the establishment of the preconditioner $ B^{(k)}$ can be viewed as the following two steps. 
	First, we obtain $ It^{(k-1)} $, which is the number of iterations obtained by solving the previous Jacobian system $ A^{(k-1)} x^{(k-1)} = b^{(k-1 )}$. Furthermore, there are two situations when judging the size of $ It^{(k-1)} $ and $\mu$ (given a threshold greater than or equal to 0). If $ It^{(k-1)} \leq \mu$, the preconditioner $ B^{(k)} $ adopts the previous preconditioner $ B^{(k-1)} $; otherwise, the preconditioner $ B^{(k)} $ is generated by calling \refalg{alg:CPR}.
\end{itemize}

\begin{remark}\label{rmk:CPR}
	\rm{\refalg{alg:CPR} is a preconditioning method ($w = \text{CPR}(A,g,w_0)$ i.e., $ w=Bg $). For the convenience of describing \refalgs{alg:share-pgmres} and \ref{alg:share-setup}, we assume \refalg{alg:CPR} creates a CPR preconditioner B.}
\end{remark}

\begin{remark}\label{rmk:CPR2}
	\rm{If the sizes of matrices $ A^{(k-1)} $ and $ A^{(k)} $ are not the same, we must regenerate the preconditioner $ B^{(k)} $.}
\end{remark}

\begin{algorithm}[H]
			\caption{ASCPR-GMRES method}
			\label{alg:share-pgmres}
			\setcounter{AlgoLine}{0}
			\LinesNumbered
			\KwIn{$B^{(k-1)}, It^{(k-1)}, A^{(k)}, b^{(k)}, x_0, \mu, k, m, tol, MaxIt $;}
			\KwOut{$ x^{(k)}, It^{(k)}, B^{(k)} $;}		
			$ B^{(k)} $ = ASCPR($B^{(k-1)}, It^{(k-1)}, \mu, k$);
			
			Compute $ r_0 \leftarrow b - A^{(k)}x_0 $, $ p_1 \leftarrow r_0 / \Vert r_0 \Vert $;
			
			\For{$It=1,\cdots, MaxIt$}
			{
				\For{$j=1,\cdots, m$}
				{
					Compute $ \bar{p} \leftarrow A^{(k)}(B^{(k)}p_j) $;
					
					Compute $ h_{i,j} \leftarrow (\bar{p}, p_i),i=1,\cdots, j$;
					
					Compute $ \tilde{p}_{j+1} \leftarrow \bar{p} - \sum\limits_{i=1}^{j}h_{i,j}p_i $;
					
					Compute $ h_{j+1,j} \leftarrow \Vert \tilde{p}_{j+1} \Vert$;
					
					\If{$h_{j+1,j}=0$}
					{
						$m \leftarrow j$; 				
						\textbf{break};
					}
					Compute $ p_{j+1} \leftarrow \tilde{p}_{j+1} / h_{j+1,j} $;
				}
				Solve the following minimization problem:
				$$ y_m = \arg\min\limits_{y \in \mathbb{R}^m} \Vert \beta e_1 - \bar{H}_m y \Vert, $$
				where $ \beta = \Vert p_1 \Vert $, $ e_1=(1,0,\cdots,0)^T \in \mathbb{R}^{m+1} $, $ \bar{H}_m := (h_{i,j}) \in \mathbb{R}^{(m+1) \times m} $;
				
				$ x_m \leftarrow x_0 + B^{(k)}(P_m y_m) $, here $ P_m :=(p_1, p_2, \cdots, p_m) $;
				
				Compute $ r_m \leftarrow b - A^{(k)}x_m $;
				
				\eIf{$\Vert r_m \Vert / \Vert r_0 \Vert < tol$}	
				{
					\textbf{break};
				} 
				{
					$ x_0 \leftarrow x_m$;
					
					$ p_1 \leftarrow r_m / \Vert r_m \Vert $;
				}
			}
			$ x^{(k)} \leftarrow x_m, It^{(k)} \leftarrow It $;
			
			\Return $ x^{(k)}, It^{(k)}, B^{(k)} $.
\end{algorithm}

\begin{algorithm}[H]
	\caption{ASCPR method}
	\label{alg:share-setup}
	\setcounter{AlgoLine}{0}
	\LinesNumbered
	\KwIn{$B^{(k-1)}, It^{(k-1)}, \mu, k$;}
	\KwOut{ $ B^{(k)} $;}
	\eIf{$k>1$ and $ It^{(k-1)} \leq \mu $}
	{
		$ B^{(k)} \leftarrow B^{(k-1)} $;
	}
	{
		The preconditioner $ B^{(k)} $ is generated by calling \refalg{alg:CPR}.
	}
	\Return $ B^{(k)} $.
\end{algorithm}

We introduce a practical threshold $ \mu $ as a criterion for the adaptive SETUP preconditioner, which aims to improve the performance of the ASCPR-GMRES. To begin with, we explain the main idea of this approach. The $ It^{(k-1)} \leq \mu $ means $ B^{(k-1)} $ is an effective preconditioner for Jacobian system $ A^{(k-1)} x^{(k-1)} = b^ {(k-1)}$ because the smaller number of iterations $ It^{(k-1)}$, the more $ B^{(k-1)} $ approximates the inverse of matrix $ A^{(k-1)} $. Because the structure of these matrices is similar and the CPR preconditioner does not require high accuracy, the preconditioner $B^{(k-1)}$ can also be used as a preconditioner for Jacobian system $A^{(k)} x^{(k)} = b^{(k)}$. Moreover, the approach can improve the performance of the solver from the following two aspects. 
\begin{itemize}
	\item First, the efficiency of the solver is improved because the number of SETUP calls can be reduced. 
	
	\item Second, the parallel performance of the solver is improved because the proportion of low parallel speedup is reduced in the solver.
\end{itemize}
Choosing a suitable $\mu $ is important to the performance of the solver. Finally, we discuss the choice of $\mu $. If $ \mu $ is too small, the number of SETUP calls is not considerably reduced. As a result, the performance of the solver is not significantly improved. Specifically, when $\mu = 0$, ASCPR-GMRES degenerates into CPR-GMRES. On the contrary, if $ \mu $ is too large, the number of iterations of the solver is dramatically increased, thereby affecting the performance of the solver. Generally, the optimal $\mu$ is determined through numerical experiments according to concrete problems.

\section{A multi-color GS method}\label{sec:5}
In this section, we propose a parallel GS algorithm from the algebraic point of view, aiming to overcoming the limitations of the conventional red-black GS algorithm; it yields the same convergence behavior as the corresponding single-thread algorithm and obtains a good parallel speedup.

To this end, the concept of an adjacency graph is introduced. Note that an adjacency graph corresponds to a sparse matrix, and the nonzero entries of the matrix reflect the connectivity relationship between vertices in the graph. Assume that a sparse matrix $A \in \mathbb{R}^{n\times n} $ is symmetric. Let $G_{A}(V, E)$ be the (undirected) adjacency graph corresponding to the sparse matrix $A=\left(a_{ij}\right)_{n \times n}$, where $V=\left\{ v_{1}, v_{2}, \cdots, v_{n}\right\}$ is the vertices set, and $E =\left\{\left(v_{i}, v_{j}\right): \forall \, i \neq j, a_{ij} \neq 0 \right\}$ is the edges set (each nonzero entry $a_{ij}$ on the non diagonal of $A$  corresponds to an edge $(v_{i}, v_{j})$).

We are now in the position to give the design goals of this algorithm for grouping vertices and the parallel GS implementation based on the strong connections of $A$.

\subsection{Algorithm design goals}\label{sec:alg-target}
The goal of our algorithm design is to divide the vertices set $V$ into $c$ subsets $V_{1},V_{2},\cdots,V_{c}$ $(1 \leq c \leq n)$, and these subsets shall satisfy the following four conditions:
\begin{enumerate}
	\item [(a)] $V = V_{1} \cup V_{2} \cdots \cup V_{c}$;
	\item [(b)] $V_{i} \cap V_{j}=\varnothing,~i \neq j,~1 \leq i,j \leq c$;
	\item [(c)] Vertices in any subset are not connected, i.e., $a_{ij} =a_{ji}=0, \forall \, v_{i}, v_{j} \in V_{\ell}~(\ell=1,\cdots,c)$; and
	\item [(d)] The number of subsets $c$ should be as small as possible.
\end{enumerate}

It is easy to see that the smaller the number of groupings $c$, the more difficult the grouping, and the larger the parallel granularity. The classic GS is, in fact, equivalent to the situation when $c$ is equal to $n$.
\begin{remark}
	\rm{The red-black GS algorithm also satisfies the above four conditions with $c=2$.}
\end{remark}

\subsection{Parallel GS algorithm}
In 2003, \cite{2003Iterative} gave an upper bound estimate of the total number of colors based on the graph theory. To proceed, we briefly review this upper bound estimate.
\begin{proposition}[Upper bound estimation]\label{thm:1} 
	The number of multi-color groupings $c$ of undirected graph $G_{A}(V, E)$ does not exceed $\mathrm{degree}\big(G_{A}(V, E)\big)+1$; that is, $c$ does not exceed the maximum number of nonzero entries in each row of matrix $A$.
\end{proposition}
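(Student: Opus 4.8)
The plan is to prove this bound via a greedy (sequential) vertex-coloring argument, which is the standard route to the classical inequality $\chi(G) \le \Delta(G)+1$ relating the chromatic number to the maximum degree. First I would set $\Delta := \mathrm{degree}\big(G_{A}(V,E)\big)$, the maximum over all vertices $v_i$ of the number of edges incident to $v_i$. Since $A$ is assumed symmetric, the edges incident to $v_i$ are in one-to-one correspondence with the off-diagonal nonzero entries $a_{ij}$ ($j \neq i$) in the $i$-th row of $A$; hence $\deg(v_i)$ equals the number of off-diagonal nonzeros in row $i$, and $\Delta+1$ equals the maximum number of nonzero entries in a row of $A$ (counting the diagonal). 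This reconciles the two equivalent formulations appearing in the statement.

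Next I would process the vertices one at a time in the order $v_1, v_2, \ldots, v_n$, maintaining a fixed palette of colors $\{1, 2, \ldots, \Delta+1\}$. When it is time to color $v_i$, the already-colored vertices adjacent to $v_i$ number at most $\deg(v_i) \le \Delta$, so at most $\Delta$ colors of the palette are forbidden for $v_i$. Because the palette contains $\Delta+1$ colors, at least one admissible color always remains; assign $v_i$ the smallest such color. By induction on $i$, every vertex receives a color in $\{1, \ldots, \Delta+1\}$, and no two adjacent vertices share a color.

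Finally I would collect the vertices by color, defining $V_{\ell} := \{v_i : v_i \text{ has color } \ell\}$ for $\ell = 1, \ldots, c$ with $c \le \Delta+1$. Conditions (a) and (b) of Section \ref{sec:alg-target} hold because every vertex is assigned exactly one color; condition (c) holds because the coloring is proper, so two vertices of the same color are never adjacent, i.e. $a_{ij} = a_{ji} = 0$ for all $v_i, v_j \in V_{\ell}$. Thus the construction yields a valid grouping with $c \le \Delta+1$, which is precisely the asserted bound.

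I do not expect a genuine obstacle here, as the result is elementary; the only point requiring care is the counting step, where one should note that it is the \emph{already-colored} neighbors that constrain $v_i$, although the cruder estimate ``at most $\Delta$ forbidden colors'' is enough in any case. The one part of the argument I would state explicitly, to match the paper's matrix-centric phrasing, is the translation between the graph degree and the row-sparsity of $A$, which is routine but worth spelling out.
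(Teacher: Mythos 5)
Your proof is correct: the greedy sequential coloring argument establishing $\chi(G)\le\Delta(G)+1$ is the standard route to this bound and is essentially the argument in the reference (Saad, 2003) that the paper cites --- the paper itself states the proposition as a review and gives no proof of its own. The only point worth flagging is that equating $\Delta+1$ with the maximum number of nonzero entries per row of $A$ implicitly assumes the diagonal entries are nonzero, which is harmless in the Gauss--Seidel setting considered here.
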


According to Proposition \ref{thm:1}, the number of groups $c$ depends on the maximum number of nonzero entries in each row of matrix $A$. When the matrix $A$ is relatively dense (such as the coarse grid matrix in AMG), the number of groups $c$ is large, which contradicts the condition (d) of Section \ref{sec:alg-target}. 
In extreme cases, the row nonzero entries of a matrix can be equal to the order of the matrix (this implies that $c=n$). Too many groups bring up two difficulties: low efficiency of the grouping algorithm and poor parallel performance. For the latter, because there are only small number of degrees of freedom within each group, it results in fine parallel granularity.

For this reason, $G_{A}(V, E)$ needs to be preprocessed to strengthen its sparseness before grouping the degrees of freedom. There are many ways to enhance the sparseness of $G_{A}(V, E)$. When the sparseness of $G_{A}(V, E)$ is enhanced, the number of groups $c$ becomes smaller. At the same time, the independence of vertex set $V_{\ell }~(\ell=1,\cdots,c)$ becomes worse, which affects the parallel results. Therefore, choosing an appropriate strategy is of great importance to enhance the sparseness of $G_{A}(V, E)$. The situation is considered where the solution vector is relatively smooth. It is found that the smaller nonzero entries (in the sense of absolute value) in each row of the matrix play a negligible role when a certain degree of freedom is smoothed by GS. In this paper, we propose a strategy to filter small nonzero entries. The matrix whose small nonzero entries are filtered is called the so-called ``matrix of strong connections." This concept is defined as follows.

The matrix $A=\left(a_{ij}\right)_{n \times n}$ corresponds to the matrix of strong connections $S(A, \theta)$ (denoted as $S$), and its entries are defined as
\begin{equation}\label{eq:strong-matrix}
	S_{i j}=\left\{\begin{array}{ll}
		1, & \left|a_{i j}\right|>\theta \sum\limits_{k=1}^{n}\left|a_{i k}\right| \\
		0, & \left|a_{i j}\right| \leq \theta \sum\limits_{k=1}^{n}\left|a_{i k}\right|
	\end{array} \quad \forall \, i, j=1,2, \cdots, n,~i \neq j, \right.
\end{equation}
where $\theta~(0 \leq\theta\leq1)$ is a given threshold, $S_{ij}=1$ when there is a strong adjacent edge between $v_{i}$ and $v_{j}$, and $S_{ij}=0$ means that there is no strong adjacent edge between $v_{i}$ and $v_{j}$.

To describe our algorithm conveniently, we introduce the following notations.
\begin{itemize}
	\item The set $S_{i}$ represents the vertex set that is strongly connected to the vertex $v_{i}$, i.e., $ S_i = \left\{ j: S_{ij} \neq 0,j=1,2, \cdots, n \right\} $.
	
	\item The set $\overline{S}_{i}$ represents the vertex set that is strongly connected to the vertex $v_{i}$ (including $v_{i}$) and whose colors are undetermined. That is, $ \overline{S}_i = \big\{ j: j\in S_{i} \cup \{i\}  ~\text{and}~ \text{color of}~j~\text{is} \\ \text{undetermined} \big\} $. 
	
	\item The set $\widehat{S}_{i}$ represents the vertex set that is the next most strongly connected to the vertex $v_{i}$ (the vertices on ``the second circle") and whose colors are undetermined. That is, $ \widehat{S}_i = \big\{ j: j\in W_{i}~\text{and}~\text{color of}~j~\text{is undetermined} \big\} $, where $ W_i = \big\{ j: \forall\,k \in S_i, j \in S_k / (S_i \cup \{i\}) \big\} $.
	
	\item The cardinality $ |\bigcdot|$ represents the number of entries in the set $ \bigcdot $. In particular, $\left|S_{i}\right|$ represents the influence value of the vertex $v_{i}$.	
\end{itemize}

\begin{figure}[htpb]
	\centering
	\includegraphics[width=0.25\linewidth]{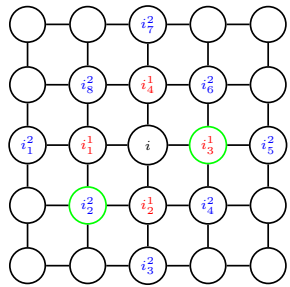}
	\caption{Schematic diagram of notations explanation.}
	\label{fig:notations}
\end{figure}

Let us explain these notations with a simple schematic diagram (see \reffig{fig:notations}). Assume that the all edges are strongly adjacent edges in \reffig{fig:notations}; moreover,  the vertices $ i^1_3 $ and $i^2_2$ are given the splitting attribute (i.e., they have their own color). Hence, $S_{i} = \left\{i^1_1, i^1_2, i^1_3, i^1_4\right\}$, $\overline{S}_{i} = \left\{i, i^1_1, i^1_2, i^1_3 \right\}$, 
$\widehat{S}_{i} = \left\{i^2_1, i^2_3, i^2_4, i^2_5, i^2_6, i^2_7, i^2_8 \right\}$, and $\left|S_{i}\right| = 4$.

In the following, our algorithms are presented. To begin, we propose a greedy splitting algorithm for the vertices set $ V $ based on the matrix of strong connections (denoted as VerticesSplitting); see \refalg{alg:strong-vsplit}. The proposed \refalg{alg:v-split} gives a vertices grouping algorithm corresponding to the matrix $ A $ (denoted as VerticesGrouping).

\begin{algorithm}[htpb]
	\caption{VerticesSplitting method} 
	\label{alg:strong-vsplit}
	\setcounter{AlgoLine}{0}
	\LinesNumbered
	\KwIn{$V,~S$;}
	\KwOut{$ W,~\overline{W}$;}
	
	Set $ W \leftarrow \varnothing,~\overline{W} \leftarrow \varnothing,~ \widehat{W} \leftarrow \varnothing $;
	
	\While{$ V \neq \varnothing $}
	{
		\eIf{$ \widehat{W} \neq \varnothing $}{
			Any take $ v_i \in \widehat{W}$ and $ | S_i | \geq  | S_j |$, $ \forall \, v_i, v_j \in \widehat{W} $;
		}{
			Any take $ v_i \in V$ and $ | S_i | \geq  | S_j |$, $ \forall \, v_i, v_j \in V $;
		}
		\uIf{$ v_i $ is not strongly connected to any vertices in the set $ W $ (i.e., $S_{ij} = 0,\forall \, j \in W$)}	
		{	
			$ W \leftarrow W \cup v_i,~V \leftarrow V/v_i $;
			
			\If{$ v_i \in \widehat{W} $}
			{
				$ \widehat{W} \leftarrow \widehat{W}/v_i $;
			}		
			$ \overline{W} \leftarrow \overline{W} \cup S_i $;		
			
			$ V \leftarrow V / S_i $;		
			
			$ \widehat{W} \leftarrow \widehat{W} \cup \widehat{S}_i $;
		}
		\Else{
			$ \overline{W} \leftarrow \overline{W} \cup v_i $;
			
			$ V \leftarrow V / v_i $;
			
			\If{$ v_i \in \widehat{W} $}
			{
				$ \widehat{W} \leftarrow \widehat{W}/v_i $;
			}
		}
	}
	\Return $ W,~\overline{W}$.
\end{algorithm}

\begin{algorithm}[htpb]
	\caption{VerticesGrouping method}
	\label{alg:v-split}
	\setcounter{AlgoLine}{0}
	\LinesNumbered
	\KwIn{$V,~S$;}
	\KwOut{$ V_\ell~(\ell=1,\cdots,c)$;}
	
	Set $c \leftarrow 0$; 
	
	\While{$ V \neq \varnothing$}
	{
		$c \leftarrow c + 1$;
		
		Call \refalg{alg:strong-vsplit} to generate $V_c$ and $\overline{V}_c$;
		
		Let $ V \leftarrow \overline{V}_c$;
	}
	\Return $ V_\ell~(\ell=1,\cdots,c)$.
\end{algorithm}

As can be easily noticed from \refalgs{alg:strong-vsplit} and \ref{alg:v-split}, the grouping numbers $c$ and the degree of independence of the vertices set $V_\ell~(\ell=1,\cdots,c)$ depend on the choice of strength threshold $\theta$. The smaller the value of $\theta$, the better the degree of independence of the vertices set, but the greater the $c$. Especially, when $ \theta=0 $, the degree of independence of the vertices set is the best (complete independence), but $ c $ is the largest. Moreover, when $\theta=1$, the degree of independence of the vertices set is the worst, and $c=1$ (at this time, the proposed algorithm degenerates to the classic GS algorithm). In this sense, it is necessary to balance the grouping numbers and the degree of independence within the vertices set. In order to better satisfy condition (d), we use an approach to weaken the condition (c) slightly in our algorithms. We expect that this approach can slightly improve the parallel performance.

Next, two properties of our proposed algorithms are given.
\begin{proposition}[Matrix diagonalization]\label{property:1}
	The block matrix $ A_{V_\ell V_\ell} $ (with $ V_\ell $ as the row and column indices, $\ell=1,\cdots,c$) is the diagonal matrix if the row and column indices of matrix $ A $ are rearranged by the indices set $ \big\{ V_\ell \big\}_{\ell=1}^{c}$. 
\end{proposition}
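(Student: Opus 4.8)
The plan is to reduce this matrix statement to a purely combinatorial fact about the output of \refalg{alg:v-split} and then to read off diagonality from the definition of the adjacency graph. After the rows and columns of $A$ are permuted so that the indices of each $V_\ell$ are contiguous, the block $A_{V_\ell V_\ell}$ has a nonzero off-diagonal entry in position $(i,j)$, with $v_i,v_j\in V_\ell$ and $i\neq j$, precisely when $(v_i,v_j)\in E$, i.e.\ when $v_i$ and $v_j$ are connected. Hence it suffices to prove the combinatorial claim that no two distinct vertices placed in the same class are connected (by a strong edge), after which the diagonal-block assertion is immediate.

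The key step is a loop invariant for \refalg{alg:strong-vsplit}: at every iteration of the \texttt{while} loop, the set $W$ contains no pair of vertices joined by a strong edge. I would prove this by induction on the number of vertices admitted into $W$. A vertex $v_i$ enters $W$ only through the \textbf{if}-branch, whose guard requires $S_{ij}=0$ for every $j\in W$; this rules out any strong \emph{outgoing} edge from a newly admitted vertex to the current members of $W$. The \emph{incoming} direction, which matters because $S(A,\theta)$ need not be symmetric, is taken care of by the accompanying updates $\overline{W}\leftarrow\overline{W}\cup S_i$ and $V\leftarrow V/S_i$: every vertex $v_j$ with $S_{ij}\neq0$ is deferred into $\overline{W}$ and deleted from the candidate pool, so it cannot later re-enter $W$. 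Combining the two directions yields that each finished class $V_\ell=W$ is strongly independent, and since \refalg{alg:v-split} merely reapplies \refalg{alg:strong-vsplit} to the deferred remainder $\overline{V}_c$, the invariant holds for all $\ell=1,\dots,c$; conditions (a)--(b) then guarantee that the $V_\ell$ partition $V$, so the permutation is well defined.

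Translating back to the matrix is the short final step: for $v_i,v_j\in V_\ell$ with $i\neq j$ we now have $S_{ij}=S_{ji}=0$, so by the filtering rule \eqref{eq:strong-matrix} the couplings $a_{ij},a_{ji}$ are sub-threshold; under the weakened form of condition (c) adopted here they are treated as absent, leaving only the diagonal entries of $A_{V_\ell V_\ell}$, which is the assertion.

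I expect the main obstacle to be exactly this last point, namely the gap between what the algorithm enforces (absence of \emph{strong} edges, $S_{ij}=0$) and what the statement literally asserts (an honestly diagonal block of $A$). For $\theta=0$ the two notions coincide and the argument is exact; for $\theta>0$ the proposition must be read relative to the matrix of strong connections $S(A,\theta)$, which is precisely the weakening of condition (c) flagged before the statement. A secondary technical point I would check carefully is the asymmetry of $S(A,\theta)$: one must confirm that the deferral $V\leftarrow V/S_i$ genuinely seals off the incoming strong edges to $W$, including the awkward case in which a vertex of $S_i$ was already parked in $\widehat{W}$ during an earlier iteration, since such a vertex is removed from $V$ but not from $\widehat{W}$ and could in principle re-enter $W$ through the $\widehat{W}$ branch while still being a strong out-neighbor of some $v_i\in W$; ensuring this cannot happen (or arguing it is harmless under the weakened condition) is the delicate part of the invariant.
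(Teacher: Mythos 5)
The paper offers no proof of this proposition at all --- it is asserted as a direct consequence of design condition (c) in Section \ref{sec:alg-target}, and only Proposition \ref{property:2} receives a proof --- so there is no authorial argument to compare yours against. Your reconstruction (reduce to the combinatorial claim that no two vertices in one class are strongly connected, establish it as a loop invariant of \refalg{alg:strong-vsplit}, then translate back through the adjacency graph) is the natural and essentially the only route. Your central diagnosis is also correct and deserves to be stated as the main point rather than as an ``obstacle'': the algorithms enforce only the absence of \emph{strong} edges inside each $V_\ell$, so for $\theta>0$ the block $A_{V_\ell V_\ell}$ is diagonal only after the sub-threshold entries filtered by \eqref{eq:strong-matrix} are discarded, and the proposition as literally written holds exactly when $\theta=0$. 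The paper tacitly concedes this when it says condition (c) is ``weakened slightly.''

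That said, you have not actually proved even the weakened claim; you have located the place where the invariant can fail and then left it open. Concretely: the guard in \refalg{alg:strong-vsplit} tests only outgoing strong edges $S_{ij}$ from the candidate to $W$, and the deferral $V\leftarrow V/S_i$ removes the strong out-neighbors of an admitted vertex from $V$ but not from $\widehat{W}$; the selection in the $\widehat{W}\neq\varnothing$ branch does not re-check membership in $V$, so a vertex $v_j\in S_i$ that entered $\widehat{W}$ as a second-circle neighbor of an earlier admitted vertex can still be chosen, and if $S_{ji}=0$ while $S_{ij}=1$ it passes the guard and joins $W$ alongside $v_i$. This asymmetry is real: even for symmetric $A$, the row sums in \eqref{eq:strong-matrix} differ, so $S(A,\theta)$ is generally nonsymmetric for $\theta>0$ (for $\theta=0$ it is symmetric and the guard alone closes the loophole, which is consistent with the proposition being exact only there). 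To finish, you must either add a hypothesis (symmetrize $S$, or restrict the $\widehat{W}$ selection to $\widehat{W}\cap V$) or prove that the scenario cannot arise; saying that ensuring it ``is the delicate part'' acknowledges the gap without closing it.
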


\begin{proposition}[Finite termination]\label{property:2}
	\refalg{alg:v-split} terminates within a finite number of steps; that is, $c \leq |V|$.
\end{proposition}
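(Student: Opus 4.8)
The plan is to reduce the finite termination of \refalg{alg:v-split} to a strict-decrease argument on the cardinality of the working set $V$: I will show that each pass through its outer \textbf{while} loop, i.e., each call to \refalg{alg:strong-vsplit} (VerticesSplitting) on the current $V$, returns a deferred set $\overline{V}_c = \overline{W}$ with $|\overline{W}| \le |V| - 1$. Since the outer loop then replaces $V$ by $\overline{V}_c$ while the counter $c$ increases by one, the integer $|V|$ drops by at least one per pass, so the loop halts after at most $|V|$ passes, which is exactly the assertion $c \le |V|$.

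First I would establish that VerticesSplitting itself terminates and keeps its output inside the input. In each iteration of its inner \textbf{while} loop the chosen vertex $v_i$ is deleted from $V$ (the update $V \leftarrow V/v_i$ occurs in both the \textbf{if} and \textbf{else} branches), so $|V|$ strictly decreases and the loop ends after finitely many steps with $V = \varnothing$. Moreover every vertex ever placed into $W$ or $\overline{W}$ is drawn from the input vertex set $V_0$ (either as the current $v_i$ or as a member of $S_i \subseteq V_0$), whence $W,\overline{W} \subseteq V_0$.

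The core step is to exhibit one input vertex that is colored but never deferred, forcing $|\overline{W}| \le |V_0| - 1$. I would track the first vertex $v_\ast$ that enters $W$. On the first inner iteration $\widehat{W} = \varnothing$ and $W = \varnothing$, so the chosen $v_\ast$ vacuously satisfies the guard ``$v_\ast$ is not strongly connected to any vertex of $W$'' and is added to $W$. I then argue $v_\ast$ is never added to $\overline{W}$: additions to $\overline{W}$ happen only as $\overline{W} \leftarrow \overline{W} \cup S_i$ in the \textbf{if} branch or as $\overline{W} \leftarrow \overline{W} \cup v_i$ in the \textbf{else} branch. The \textbf{if}-guard guarantees $S_i \cap W = \varnothing$, and since $v_\ast \in W$ while $W$ only grows, $v_\ast \notin S_i$ at every later step (also $v_\ast \notin S_{v_\ast}$, as $S_i$ excludes the diagonal index); the \textbf{else} branch takes $v_i$ from the current $V$ or $\widehat{W}$, neither of which contains $v_\ast$ once it is colored, because $V$ only shrinks and $\widehat{S}_i$ — the sole source feeding $\widehat{W}$ — consists only of vertices whose color is still undetermined. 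Hence $v_\ast \in V_0 \setminus \overline{W}$, giving $|\overline{W}| \le |V_0| - 1$ (and incidentally $V_c = W \neq \varnothing$).

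Finally I would assemble these facts: applied to the $c$-th pass, whose input $V = \overline{V}_{c-1}$ is nonempty since the loop guard holds, the bound yields $|\overline{V}_c| \le |\overline{V}_{c-1}| - 1$, so the cardinalities form a strictly decreasing sequence of nonnegative integers that reaches $0$; thus the number of passes, and therefore $c$, is at most the initial $|V|$. The main obstacle is the core step: because $S_i$ and $\widehat{S}_i$ are defined statically from the matrix of strong connections, one must argue carefully that an already-colored vertex cannot be reintroduced into the deferred pool through a later neighborhood union. In particular, I would avoid leaning on the stronger claim $W \cap \overline{W} = \varnothing$, which may fail because $\widehat{W}$ and $\overline{W}$ can overlap; isolating the single vertex $v_\ast$ sidesteps this subtlety while still delivering the required strict decrease.
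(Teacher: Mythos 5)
Your proof is correct and follows essentially the same route as the paper's: both arguments rest on showing that each call to VerticesSplitting must color at least one vertex (so $V_c \neq \varnothing$ whenever the current working set is nonempty), whence the working set strictly shrinks and the outer loop of \refalg{alg:v-split} runs at most $|V|$ times. You merely make explicit the strict-decrease step (via the never-deferred vertex $v_\ast$ giving $|\overline{V}_c| \leq |\overline{V}_{c-1}|-1$) that the paper's one-line proof leaves implicit.
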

\begin{proof}
	To prove that \refalg{alg:v-split} terminates within a finite step, just prove $ V_c \neq \varnothing $ in line 5 of \refalg{alg:v-split}. According to \refalg{alg:strong-vsplit}, if $V \neq \varnothing$, $V_c$ (i.e., the output variable $W$ of \refalg{alg:strong-vsplit}) contains at least one vertex. From lines 3--7 of \refalg{alg:v-split}, $ c \leq |V|$ can be obtained.
\end{proof}

Note that the \refalg{alg:v-split} can split $G_{A}(V, E)$ into subgraphs $G_{A_\ell}(V_\ell, E_\ell), \ell=1,\cdots,c$. The $ S(A_\ell, \theta) $ is denoted as the adjacency matrix corresponding to each subgraph. Moreover, each subgraph corresponds to a submatrix $A_\ell(V_\ell)$ of matrix $ A $. According to Proposition \ref{property:1}, the diagonal blocks of the submatrix $A_\ell(V_\ell)$ are diagonal, and the GS smoothing of the submatrix $A_\ell(V_\ell)$ is completely parallel at this time. Furthermore, a parallel (or multi-color) GS algorithm based on the strong connections of the matrix is given by \refalg{alg:GS-parallel}, denoted as PGS-SCM.

\begin{algorithm}[H]
	\caption{PGS-SCM method}
	\label{alg:GS-parallel}
	\setcounter{AlgoLine}{0}
	\LinesNumbered
	\KwIn{$ A, x, b, \theta $;}
	\KwOut{$ x $;}
	
	Using the matrix $ A $ and the formula \eqref{eq:strong-matrix} to generate  vertices set $ V $ and matrix of strong connections $ S $, respectively;
	
	Call \refalg{alg:v-split} to generate independent vertices subset $V_{\ell}$ ($\ell=1,\cdots,c$);
	
	Using $V_{\ell}$ to split the matrix $ A $ into submatrix $ A_\ell$ ($\ell=1,\cdots,c$);
	
	\For{$\ell=1, \cdots, c$}
	{
		Parallel call the classic GS algorithm of submatrix $A_\ell$;
	}
	
	\Return $ x $.
\end{algorithm}

Finally, the proposed algorithms can be parallelized for multi-core and many-core architectures. We develop the OpenMP version and the CUDA version of the parallel program, respectively. Furthermore, these algorithms are integrated into the FASP framework. The results of the numerical experiments will be given in the next section.

\section{Numerical experiments}\label{sec:6}
In this section, to demonstrate the performance of the proposed methods, we consider the two-phase and three-phase test problems based on the SPE10 benchmark. The numerical experiments are performed on a machine with Intel Xeon Platinum 8260 CPU (32 cores, 2.40GHz), 128GB DRAM, and NVIDIA Tesla T4 GPU (2560 cores, 16GB Memory).

Here we provide some details of the ASCPR-GMRES method. For the CPR preconditioner, in the first stage, we use the Unsmoothed Aggregation AMG (UA-AMG) method \cite{1993Multi} to approximate the inverse of the pressure coefficient matrix, where the aggregation strategy is the so-called non-symmetric pairwise matching aggregation (NPAIR) \cite{NapovPariwise}, the cycle type is the Nonlinear AMLI-cycle \cite{HuXiaozhe2013}, the smoothing operator is PGS-SCM, the degree of freedom of the coarsest space is set to be 10000, and the coarsest space solver is a direct solver. In the second stage, we use the BILU method based on Level Scheduling (LS) \cite{2003Iterative} to approximate the inverse of the coefficient matrix. For the restarted GMRES method, the restarting number $m$ is 28, the maximum number of iterations \emph{MaxIt} is 100, and the tolerance error of relative residual norm \emph{tol} is $10^{-5}$.

\subsection{Two-phase SPE10}
The standard two-phase SPE10~\cite{2001TenthSPE} benchmark is tested to demonstrate the performance of the proposed methods. The model dimensions are $1200 \times 2200 \times 170$ (ft), and the number of grid cells is $60\times220\times80$ (the total number of grid cells is 1,122,000 and the number of active cells is 1,094,422). First, we verify convergence behavior and parallel performance of the PGS-SCM method. Furthermore, we test the parallel performance for the ASCPR-GMRES-OMP and ASCPR-GMRES-CUDA methods, where the ASCPR-GMRES-OMP and ASCPR-GMRES-CUDA correspond to solver versions in OpenMP and CUDA, respectively. Finally, our results based on the in-house simulator of PetroChina, HiSim 2.0~\cite{2014A1}, are compared with the results obtained using a commercial simulator, tNavigator (2020 version)~\cite{tNavigatorManual2020}.

\subsubsection{PGS-SCM method}
To evaluate the convergence behavior and parallel performance of the proposed PGS-SCM method, we employ the CPR-GMRES method as the solver for the petroleum reservoir simulation. Furthermore, we employ the parallel GS method based on natural ordering (denoted as PGS-NO) as a reference for comparison.
\begin{example}\label{ex:1-2P}
	\rm{The two-phase example is considered, and the numerical simulation conducted for 2000 days. We use the different number of threads ($ \mathrm{NT}=1,2,4,8$, and $16$) to test the parallel performance of CPR-GMRES-PGS-NO and CPR-GMRES-PGS-SCM. The impacts of the different strength thresholds $ \theta $ ($\theta=0,0.05,0.1$, and $0.3 $) on CPR-GMRES-PGS-SCM are also tested.}
\end{example}
\reftab{tab:MC-GS-2P} lists the total number of linear iterations (\emph{Iter}), the total wall time in seconds (\emph{Time}), and the parallel speedup (\emph{Speedup} defined in Remark \ref{rmk:Speedup}).

\begin{table}[h]
	\centering
	\setlength{\tabcolsep}{4.5pt} 
	\renewcommand\arraystretch{1.5} 
	\caption{Iter, Time(s), and Speedup of the two solvers with different NT for the two-phase SPE10 problem.}
	\label{tab:MC-GS-2P}
	\begin{tabular}{c|c|c|ccccc}
		\toprule
		Solvers & $\theta$ & NT  & 1& 2 & 4 & 8 & 16 \\
		\hline
		\multirow{3}{*}{CPR-GMRES-PGS-NO}
		&\multirow{3}{*}{---}	&\emph{Iter}	  &5823 	&5826 	&5842 	&5882 	&5920 \\
		&											&\emph{Time}		&4701.26 	&2577.35 	&1497.12 	&988.09 	&890.78 \\
		&											&\emph{Speedup} &1.00 	&1.82 	&3.14 	&4.76 	&5.28 \\	
		\hline
		\multirow{12}{*}{CPR-GMRES-PGS-SCM}
		&\multirow{3}{*}{0}		&\emph{Iter}	  &\textbf{5837} 	&\textbf{5837} 	&\textbf{5837} 	&\textbf{5837 }	&\textbf{5837} \\
		&											&\emph{Time}		&4780.42 	&2610.02 	&1501.12 	&976.80 	&847.26 \\
		&											&\emph{Speedup} &1.00 	&1.83 	&3.18 	&4.89 	&5.64 \\
		\cline{2-8}	
		&\multirow{3}{*}{0.05}&\emph{Iter}	  &5823 	&5822 	&5818 	&5822 	&\textbf{5827} \\
		&											&\emph{Time}		&4753.00 	&2593.07 	&1491.97 	&970.26 	&\textbf{829.24} \\
		&											&\emph{Speedup} &1.00 	&1.83 	&3.19 	&4.90 	&\textbf{5.73} \\	
		\cline{2-8}	
		&\multirow{3}{*}{0.1} &\emph{Iter}	  &5832 	&5833 	&5826 	&5825 	&5846 \\
		&											&\emph{Time}		&4782.52 	&2599.12 	&1497.25 	&977.93 	&850.82 \\
		&											&\emph{Speedup} &1.00 	&1.84 	&3.19 	&4.89 	&5.62 \\
		\cline{2-8}	
		&\multirow{3}{*}{0.3}	&\emph{Iter}	  &5821 	&5839 	&5841 	&5876 	&5912 \\
		&											&\emph{Time}		&4732.72 	&2615.05 	&1510.71 	&981.28 	&872.32 \\
		&											&\emph{Speedup} &1.00 	&1.81 	&3.13 	&4.82 	&5.43 \\		
		\bottomrule
	\end{tabular}
\end{table}
\begin{remark}\label{rmk:Speedup}
\rm{The calculation formula of speedup is
	$$ Speedup = \frac{T_1}{T_n}, $$
	where $ T_1 $ represents the wall time obtained by a single thread (core), and $ T_n $ represents the wall time obtained by $n$ threads.}
\end{remark}

It can be seen from \reftab{tab:MC-GS-2P} that, for CPR-GMRES-PGS-NO, the total number of linear iterations gradually increases as NT increases. In particular, if $\mathrm{NT}=16$, the total number of linear iterations increases by 97 compared with the single-thread case (the speedup is about 5.28). On the other hand, the number of iterations of the parallel GS algorithm based on natural ordering is not as stable. That is, as the number of threads increases, the number of iterations increases, which affects the parallel speedup as well. 
However, for CPR-GMRES-PGS-SCM (when $ \theta=0 $), the total number of linear iterations is not changed with a larger NT. Such results indicate that the proposed PGS-SCM method yields same convergence behavior as the corresponding single-thread algorithm.  
Moreover, from the perspective of parallel performance, CPR-GMRES-PGS-SCM gets a higher speedup compared with CPR-GMRES-PGS-NO. For example, when $\mathrm{NT}=16$, the speedup of CPR-GMRES-PGS-SCM is also higher (the speedup is about 5.73).  

Next, we discuss the influence of the strength threshold $ \theta $ on the parallel performance for the CPR-GMRES-PGS-SCM solver.
When $ \theta $ is small (for example, $\theta=0.0$ or $0.05$), the total number of linear iterations changes little when NT increases. However, if $\theta$ gets larger (for example, $\theta=0.1$ or $0.3 $), the varied range of the total number of linear iterations is enlarged with the increase of NT. If $\mathrm{NT}=16$ is considered, as $ \theta $ increases, the total number of linear iterations first decreases and then increases, and the speedup first increases and then decreases. In particular, when $ \theta = 0.05$, the minimum total number of linear iterations is 5827, and the speedup is the highest. This shows that the strength threshold $ \theta $ affects the convergence as well as the parallel performance.

\subsubsection{ASCPR-GMRES-OMP method}
\begin{example}\label{ex:5}
	\rm{For the two-phase SPE10 example, the numerical simulation is conducted for 2000 days, and $ \theta=0.05 $. We explore the parallel performance of ASCPR-GMRES-OMP for four different values of $ \mu $ (i.e., $ \mu = 0, 20, 30$, and $40$), when $\mathrm{NT} = 1, 2, 4, 8$, and $16$, respectively.}
\end{example}
	
\begin{table}[h]
	\centering
	\setlength{\tabcolsep}{14pt} 
	\renewcommand\arraystretch{1.5} 
	\caption{SetupCalls, SetupRatio, Iter, Time(s), $\text{{Speedup}}^{*}$, and Speedup for the two-phase SPE10 problem.}
	\label{tab:Time-Newton-Linear-SPE10-2P}	
	\begin{tabular}{cccccccc}
		\toprule
		& $\mu$  & 1& 2 & 4 & 8 & 16\\
		\hline
		\multirow{4}{*}{\emph{SetupCalls}}  
		&0	&239	&239	&239	&239	&239	\\
		&20	&188	&188	&188	&188	&188	\\
		&30	&58		&58		&58		&58		&58	\\
		&40	&33		&33		&33		&33		&33	\\
		\hline
		\multirow{4}{*}{\emph{SetupRatio}}  
		&0  &11.08\%	&14.59\%	&20.63\%	&29.34\%	&41.38\% \\
		&20	& 9.45\%	&12.46\%	&17.55\%	&25.45\%	&38.14\% \\
		&30	& 5.19\%	& 6.52\%	& 9.20\%	&14.05\%	&26.09\% \\
		&40	& 4.05\%	& 4.99\%	& 6.62\%	&11.03\%	&21.83\% \\
		\hline
		\multirow{4}{*}{\emph{Iter}}  
		&0	&5823	&5822	&5818	&5822	&5827 \\
		&20	&5855	&5854	&5853	&5855	&5857 \\
		&30	&6309	&6308	&6315	&6317	&6319 \\
		&40	&7033	&7034	&7037	&7041	&7044 \\
		\hline
		\multirow{4}{*}{\emph{Time}}
		&0	&4753.00 	&2593.07 	&1491.97 	&970.26 	&829.24 \\
		&20	&4821.98 	&2606.18 	&1488.95 	&949.98 	&804.56 \\
		&\textbf{30}	&\textbf{4919.8}1 	&\textbf{2617.66} 	&\textbf{1444.92} 	&\textbf{877.85} 	&\textbf{718.13} \\
		&40	&5475.72 	&2880.97 	&1570.72 	&946.35 	&769.34 \\
		\hline
		\multirow{4}{*}{$\text{\emph{Speedup}}^{*}$}  
		&0	&1.00 	&1.83 	&3.19 	&4.90 	&5.73 \\
		&20	&0.99 	&1.82 	&3.19 	&5.00 	&5.91 \\
		&\textbf{30}	&\textbf{0.97} 	&\textbf{1.82} 	&\textbf{3.29} 	&\textbf{5.4}1 	&\textbf{6.62} \\
		&40	&0.87 	&1.65 	&3.03 	&5.02 	&6.18 \\
		\hline
		\multirow{4}{*}{$\emph{Speedup}$}  
		&0	&1.00 	&1.83 	&3.19 	&4.90 	&5.73 \\
		&20	&1.00 	&1.85 	&3.24 	&5.08 	&5.99 \\
		&30	&1.00 	&1.88 	&3.40 	&5.60 	&6.85 \\
		&40	&1.00 	&1.90 	&3.49 	&5.79 	&7.12 \\
		\bottomrule
	\end{tabular}
\end{table}

To assess the effects of different thresholds $\mu $ on the performance of ASCPR-GMRES-OMP for the two-phase SPE10 problem, \reftab{tab:Time-Newton-Linear-SPE10-2P} lists the number of SETUP calls (\emph{SetupCalls}), the ratio of SETUP in the total solution time (\emph{SetupRatio}), the total number of linear iterations (\emph{Iter}), total solution time in seconds (\emph{Time}), new parallel speedup ($\text{\emph{Speedup}}^{*}$, see Remark \ref{rmk:Speedup-star}), and parallel speedup (\emph{Speedup}).
	
\begin{remark}\label{rmk:Speedup-star}
\rm{In order to compare the parallel speedup of ASCPR-GMRES-OMP fairly, we propose a new parallel speedup (denoted as {$\text{\emph{Speedup}}^{*}$}), and it is defined as follows:
$$ \text{\emph{Speedup}}^{*} = \frac{T_1^{0}}{T_n^{\mu}}, $$
where $ T_1^{0} $ represents the wall time obtained by a single thread when the general preconditioner ($ \mu=0 $) is used, and $ T_n^{\mu} $ represents the wall time obtained by $ n $ threads when the adaptive SETUP threshold $ \mu $ is used.}
\end{remark}

From~\reftab{tab:Time-Newton-Linear-SPE10-2P}, we first look at the number of SETUP calls, the ratio of SETUP in the total solution time, and the total number of linear iterations. When NT is fixed, both the number of SETUP calls and the ratio of SETUP in the total solution time decrease as $\mu$ increases. In particular, when $\mu=40$, there are only 33 SETUP calls. When $\mu$ is fixed, the number of SETUP calls does not changed with respect to NT, but the ratio of SETUP in the total solution time is increasing. If NT is fixed, the total number of linear iterations is increasing as $\mu$ increases. These observations indicate that the number of SETUP calls is significantly decreased with the increase of $\mu$, while the total number of linear iterations is also increased. Furthermore, we focus on the total solution time and the new parallel speedup. When $ \mathrm{NT}=16 $ and $\mu=30$, the total solution time is 718.13 (s), and the corresponding the new parallel speedup is 6.62. Compared with the regular CPR preconditioner, the total solution time of ASCPR-GMRES-OMP is reduced from 829.24 (s) to 718.13 (s), and the new parallel speedup is increased from 5.73 to 6.62. These results show that the proposed method can improve the parallel performance of the solver.

\subsubsection{ASCPR-GMRES-CUDA method}
\begin{example}\label{ex:6}
	\rm{For the two-phase SPE10 example, the numerical simulation is conducted for 2000 days, and $ \theta=0.05 $. We explore the impacts of $ \mu $ ($ \mu = 0, 20, 30$, and $40$) on the parallel performance of ASCPR-GMRES-CUDA.}
\end{example}

\reftab{tab:GPU-results-2p} lists \emph{SetupCalls}, \emph{SetupRatio}, \emph{Iter}, \emph{Time}, and \emph{Speedup} of ASCPR-GMRES-CUDA for the two-phase SPE10 problem. The single-thread calculation results of the OpenMP version program ASCPR-GMRES-OMP(1) are also added for comparison. For ASCPR-GMRES-CUDA solver, as $ \mu $ increases, both the number of SETUP calls and the ratio of SETUP in the total solution time decreases, the total number of linear iterations gradually increases, and the parallel speedup increases. The CUDA version can be 6.22 times faster than the ASCPR-GMRES-OMP(1). In case $ \mu=40 $, the total solution time of ASCPR-GMRES-CUDA is reduced from 763.66 (s) to 424.60 (s), which can be 1.80 times faster compared with $ \mu=0 $. At the same time, compared with ASCPR-GMRES-OMP(1), the speedup of ASCPR-GMRES-CUDA reaches 11.19.

\begin{table}[h]
	\centering
	\setlength{\tabcolsep}{7.0pt} 
	\renewcommand\arraystretch{1.5} 
	\caption{SetupCalls, SetupRatio, Iter, Time(s), and Speedup (compared to ASCPR-GMRES-OMP(1)) of the different $\mu$ for the two-phase SPE10 problem.}
	\label{tab:GPU-results-2p}
	\begin{tabular}{c|c|c|c|c|c|c}
		\hline
		Solvers & $\mu$& \emph{SetupCalls}&	\emph{SetupRatio} & \emph{Iter}&	\emph{Time}&	\emph{Speedup} \\
		\hline
		ASCPR-GMRES-OMP(1) &0	&239	&12.22\%	&5823	&4753.00 	& ---   \\
		\hline
		\multirow{5}{*}{ASCPR-GMRES-CUDA}
		&0	&239	&71.39\%	&5525	&763.66 	&6.22  \\
		&20	&185	&66.11\%	&5659	&677.13 	&7.02  \\
		&30	&52		&41.99\%	&6182	&432.45 	&10.99 \\
		&\textbf{40}	&\textbf{34}		&\textbf{34.58\%}	&\textbf{6740}	&\textbf{424.60} 	&\textbf{11.19} \\
		\hline
	\end{tabular}
\end{table}

Finally, we present the effects of different $\mu$ on the timestep size $ \Delta t $ for ASCPR-GMRES-OMP and ASCPR-GMRES-CUDA solvers, see~\reffig{fig:DeltaT-SPE10-2P}. It can be seen from~\reffig{fig:DeltaT-SPE10-2P-OMP-MU} that when $\mu=0$, the timestep size is consistent for ASCPR-GMRES-OMP with different thread numbers. From~\reffigs{fig:DeltaT-SPE10-2P-OMP-NT} and~\ref{fig:DeltaT-SPE10-2P-GPU}, when $\mu$ changes, the timestep sizes of the ASCPR-GMRES-OMP and ASCPR-GMRES-CUDA solvers rarely change, and the effect on the overall numerical results can be ignored.

\begin{figure}
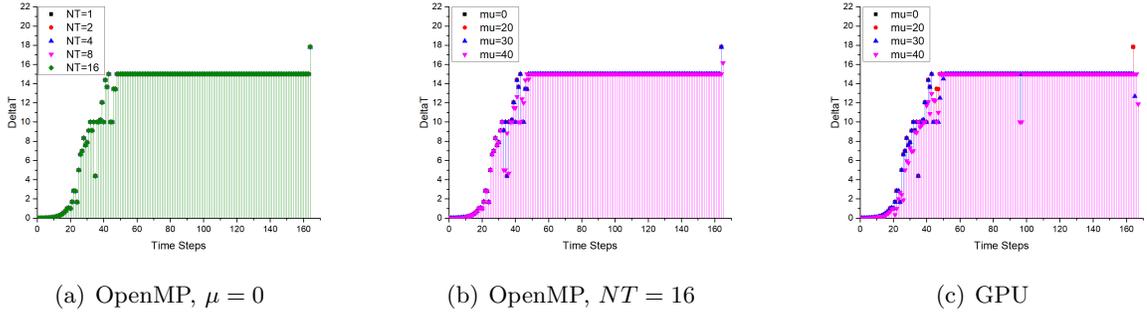

	\centering
	\subfigure[OpenMP, $ \mu=0 $]{\label{fig:DeltaT-SPE10-2P-OMP-MU}
		\begin{minipage}[t]{0.33\linewidth}
			\centering
			\includegraphics[width=5.5cm]{OMP-DeltaT-SPE10-2P-mu=0}
		\end{minipage}
	}%
	\subfigure[OpenMP, $ NT=16 $]{\label{fig:DeltaT-SPE10-2P-OMP-NT}
		\begin{minipage}[t]{0.33\linewidth}
			\centering
			\includegraphics[width=5.5cm]{OMP-DeltaT-SPE10-2P-NT=16-new}
		\end{minipage}
	}%
	\subfigure[GPU]{\label{fig:DeltaT-SPE10-2P-GPU}
		\begin{minipage}[t]{0.33\linewidth}
			\centering
			\includegraphics[width=5.5cm]{GPU-DeltaT-SPE10-2P-new}
		\end{minipage}
	}%
	\caption{The timestep size $ \Delta t $ of ASCPR-GMRES-OMP and ASCPR-GMRES-CUDA for the two-phase SPE10 problem.}\label{fig:DeltaT-SPE10-2P}
\end{figure}

\subsubsection{Performance comparisons with commercial simulator} 
To better evaluate the performance of the proposed methods, we also test the same problem with a commercial simulator for comparison. The default solving method and parameters are used, where the maximum number of iterations \emph{MaxIt} is 1000 and the tolerance of relative residual norm \emph{tol} is set to $10^{-5}$.  We compare the experimental results of the OpenMP and GPU versions for commercial and our simulators, respectively. 

To begin with, \reffigs{fig:FOPR-SPE10-2P} and \ref{fig:FPR-SPE10-2P} display the field oil production rate and average pressure graphs. Through quantitative comparison, it can be found that our simulation results are consistent with the results of commercial simulator. 

\begin{figure}[H]
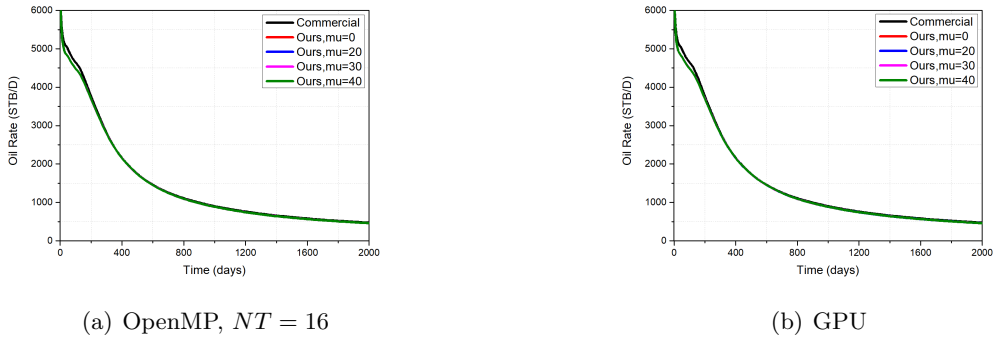

	\centering	
	\subfigure[OpenMP, $ NT=16 $]{
		\begin{minipage}[t]{0.5\linewidth}
			\centering
			\includegraphics[width=6cm]{SPE10-2P-FOPR-OMP-NT16-vs}
		\end{minipage}
	}%
	\subfigure[GPU]{
		\begin{minipage}[t]{0.5\linewidth}
			\centering
			\includegraphics[width=6cm]{SPE10-2P-FOPR-GPU-vs}
		\end{minipage}
	}%
	\caption{Field oil production rate of OpenMP and GPU for the two-phase SPE10 problem.}\label{fig:FOPR-SPE10-2P}
\end{figure}
\begin{figure}[h]
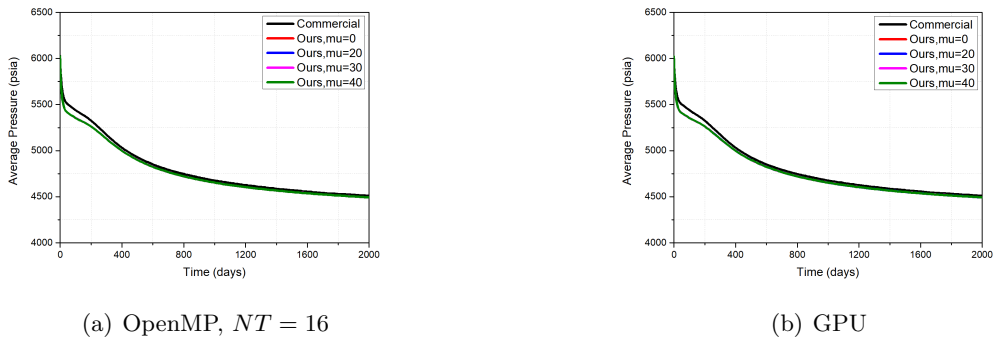

	\centering	
	\subfigure[OpenMP, $ NT=16 $]{
		\begin{minipage}[t]{0.5\linewidth}
			\centering
			\includegraphics[width=6cm]{SPE10-2P-FPR-OMP-NT16-vs}
		\end{minipage}
	}%
	\subfigure[GPU]{
		\begin{minipage}[t]{0.5\linewidth}
			\centering
			\includegraphics[width=6cm]{SPE10-2P-FPR-GPU-vs}
		\end{minipage}
	}%
	\caption{Average pressure of OpenMP and GPU for the two-phase SPE10 problem.}\label{fig:FPR-SPE10-2P}
\end{figure}

\begin{table}[H]
	\centering
	\setlength{\tabcolsep}{10pt} 
	\renewcommand\arraystretch{1.2} 
	\caption{NumTSteps, NumNSteps, Iter, AvgIter, Time(h), and Speedup comparisons of the OpenMP version of commercial and our simulators for the two-phase SPE10 problem.}
	\label{tab:tNavigator-vs-HiSim-OMP-SPE10-2P}		
	\begin{tabular}{c|c|cccccc}
		\toprule
		Simulators & $\mu$ & NT & 1& 2 & 4 & 8 & 16\\
		\hline
		\multirow{6}{*}{Commercial} &\multirow{6}{*}{---}	
		&\emph{NumTSteps} &671	  &785	  &891	  &1031	  &1100 \\
		&&\emph{NumNSteps} &1115	  &1244	  &1328	  &1458	  &1515 \\
		&&\emph{Iter}		  &150152	&160254	&161432	&167757	&175452 \\
		&&\emph{AvgIter}    &134.7 	&128.8  &121.6 	&115.1 	&115.8 \\
		&&\emph{Time}     &14.62	&8.45 	&4.71		&2.72		&\textbf{1.54}  \\
		&&\emph{Speedup}  & 1.00 	&1.73 	&3.10 	&5.37 	&9.49 \\
		\hline
		\multirow{24}{*}{Ours} &
		\multirow{6}{*}{0}	
		&\emph{NumTSteps} &164 	  &164 	  &164 	  &164 	  &164  \\
		&&\emph{NumNSteps} &239 	  &239 	  &239    &239 	  &239  \\
		&&\emph{Iter}		  &5823	  &5822	  &5818	  &5822	  &5827 \\
		&&\emph{AvgIter}    &24.4 	&24.4 	&24.3 	&24.4 	&24.4  \\
		&&\emph{Time}     &1.48 	&0.88 	&0.57 	&0.43 	&0.39 \\
		&&\emph{Speedup}  &	1.00 	&1.68 	&2.60 	&3.44 	&3.79 \\
		\cline{2-8}
		&\multirow{6}{*}{20}	
		&\emph{NumTSteps} &164 	  &164 	  &164 	  &164 	  &164  \\
		&&\emph{NumNSteps} &239 	  &239 	  &239    &239 	  &239  \\
		&&\emph{Iter}		  &5855		&5854		&5853		&5855		&5857 \\
		&&\emph{AvgIter}   	&24.5 	&24.5 	&24.5 	&24.5 	&24.5  \\
		&&\emph{Time}    	&1.50 	&0.88 	&0.57 	&0.42 	&0.38 \\
		&&\emph{Speedup} 	&1.00 	&1.70 	&2.63 	&3.57 	&3.95 \\
		\cline{2-8}
		&\multirow{6}{*}{\textbf{30}}	
		&\emph{NumTSteps} &164 	  &164 	  &164 	  &164 	  &164  \\
		&&\emph{NumNSteps} &239 	  &239 	  &239    &239 	  &239  \\
		&&\emph{Iter}		  &6309		&6308		&6315		&6317		&6319 \\
		&&\emph{AvgIter}   	&26.4 	&26.4 	&26.4 	&26.4 	&26.4  \\
		&&\emph{Time}    	&1.52 	&0.88 	&0.56 	&0.40 	&\textbf{0.36} \\
		&&\emph{Speedup} 	&	1.00 	&1.73 	&2.71 	&3.80 	&4.22 \\
		\cline{2-8}
		&\multirow{6}{*}{40}	
		&\emph{NumTSteps} &164		&164		&164		&164		&165 \\
		&&\emph{NumNSteps} &246		&246		&246		&246		&247 \\
		&&\emph{Iter}		  &7033		&7034		&7037		&7041		&7064 \\
		&&\emph{AvgIter}   	&28.6 	&28.6 	&28.6 	&28.6 	&28.6  \\
		&&\emph{Time}    	&1.68 	&0.96 	&0.60 	&0.42 	&0.37 \\
		&&\emph{Speedup} 	&1.00 	&1.75 	&2.80 	&4.00 	&4.54  \\
		\bottomrule
	\end{tabular}
\end{table}

\reftabs{tab:tNavigator-vs-HiSim-OMP-SPE10-2P} and \ref{tab:tNavigator-vs-HiSim-GPU-SPE10-2P} present the number of time steps (\emph{NumTSteps}), the number of Newton iterations (\emph{NumNSteps}), the number of linear iterations (\emph{Iter}), the average number of linear iterations per Newton iteration (\emph{AvgIter}), the total simulation time (\emph{Time}), and the parallel speedup (\emph{Speedup}) for the OpenMP and GPU versions of commercial and our simulators, respectively. For the OpenMP version of the commercial simulator, when the number of threads is changed from 1 to 16, the simulation time is reduced from 14.62 (h) to 1.54 (h). At this point, the maximum speedup reaches 9.49 and the average number of linear iterations per Newton iteration exceeds 115. In our simulator, when $\mu=30$ and the number of threads varied from 1 to 16, the simulation time was reduced from 1.52 (h) to 0.36 (h). At this point, the maximum speedup reaches 4.22 and the average number of linear iterations per Newton iteration is 26.4. This indicates that the proposed method can speed up the simulation time by 4.27 times compared with the commercial simulator. On the one hand, the commercial software chooses some algorithms with high parallel speedup. Usually, such algorithms are easier to parallelize, while they take more iterations to converge. It can be seen from the average number of linear iterations per Newton iteration---Its \emph{AvgIter} is about 4 times more than ours. On the other hand, increasing the number of iterations also improves the parallel speedup. This is because the proportion of solver in the SOLVE increases, and the parallel speedup of the SOLVE phase is usually higher than that of SETUP. Finally, it is worth mentioning that we only parallelize the linear solver in our simulator, while the rest of our simulator is still sequential.

\begin{table}[h]
	\centering
	\setlength{\tabcolsep}{8pt} 
	\renewcommand\arraystretch{1.5} 
	\caption{NumTSteps, NumNSteps, Iter, AvgIter, Time(h), and Speedup (compared with the commercial simulator) comparisons of the GPU version of commercial and our simulators for the two-phase SPE10 problem.}\label{tab:tNavigator-vs-HiSim-GPU-SPE10-2P}	
	\begin{tabular}{c|c|c|c|c|c|c|c}
		\hline
		Simulators	&$\mu$ &\emph{NumTSteps}&\emph{NumNSteps}&\emph{Iter}		&\emph{AvgIter}	&\emph{Time} &\emph{Speedup} \\
		\hline
		Commercial	&---	&1004 &1431 &170276	 &119.0 &3.070 	&---      \\
		\hline
		\multirow{4}{*}{Ours}
		&0	&164 &239 &5525	 &23.1 &0.387 &7.93   \\
		&20	&164& 240 &5659	 &23.6 &0.358 &8.57   \\
		&30	&165 &240 &6182	 &25.8 &0.280 &10.97  \\
		&\textbf{40} &167 & 244 &6740	 &\textbf{27.6}	&\textbf{0.278} &\textbf{11.05}  \\
		\hline
	\end{tabular}
\end{table}

According to~\reftab{tab:tNavigator-vs-HiSim-GPU-SPE10-2P}, we can summarize similar conclusions for GPUs. Compared with the commercial simulator, our obtained numbers of time steps, Newton iterations, and linear iterations are much smaller, and the simulation time is shorter. Especially when $\mu=40$, the speedup of our simulator can achieve 11.05 compared to commercial simulator.

\subsection{Three-phase SPE10}
A three-phase benchmark problem is obtained by changing the fluid properties of the original two-phase SPE10 \cite{2001TenthSPE}. We provide modification information from two-phase to three-phase for the SPE10 benchmark problem. Some keywords and values are added to the input file of the original two-phase SPE10 problem. We change the phase states, fluid properties, and relative permeabilities as follows.
\begin{itemize}
	\item Phase states:
	
	\begin{table}[h]
		\centering	
		\caption{Phase states for two-phase and three-phase.} \label{tab:phase-state}
		\begin{tabular}{cc}
			\toprule
			Two-phase& Three-phase \\
			\midrule
			OIL		&  OIL		\\
			WATER	&  WATER	\\
			&  GAS		\\
			&  DISGAS	\\
			\bottomrule
		\end{tabular}
	\end{table}
	
	\begin{remark}
		\rm{Note that the keyword ``DISGAS'' indicates that dissolved gas in oil is considered in three-phase example.}
	\end{remark}

	\item Fluid properties:
	
	\begin{table}[h]
		\centering	
		\caption{PVDO: PVT properties of dead oil (no dissolved gas) for two-phase. PVCO: PVT properties of live oil in compressibility form (with dissolved gas) for three-phase. Left: PVDO, right: PVCO.}\label{tab:2P-PVDO}
		\begin{tabular}{ccc}
			\toprule
			$ P_o $     & $ B_o $     & $\mu_o$ \\
			\midrule
			300    &  1.05   &    2.85 \\    
			800    &  1.02   &    2.99 \\    
			8000   &  1.01   &    3.00 \\  
			\bottomrule
		\end{tabular}
		\qquad
		\begin{tabular}{cccccc}
			\toprule
			$P_{bub}$	&$ R_{so} $ &$B_o$  &$ \mu_{o} $ & $\kappa_o$ & $vc_o $ \\
			\midrule
			400     &0.0165    &1.01200     &3.5057  &1.1388e-6   &0 \\
			4000    &0.1130    &1.01278     &2.9972  &1.1388e-6   &0 \\
			10000   &0.1810    &1.15500     &2.6675  &1.1388e-6   &0 \\
			\bottomrule
		\end{tabular}
	\end{table}
	
	where $P_o$, $B_o$, and $ \mu_{o} $ denote the pressure, volume coefficient, and viscosity coefficient of oil phase, respectively. $P_{bub}$ and $ R_{so} $ denote the bubble point pressure for oil and dissolved gas-oil ratio; $B_o$ and $ \mu_{o} $ denote the volume coefficient and viscosity coefficient of saturated oil; $\kappa_o$ and $vc_o $ denote the compressibility and viscosity compressibility of undersaturated oil.
	
	\begin{table}[h]
		\centering	
		\caption{PMAX: Maximum pressure during the simulation.} \label{tab:maximum-pressure}
		\begin{tabular}{cccc}
			\toprule
			$ P_{\max} $& $ \hat{P}_{\max} $ & $ \hat{P}_{\min} $ & $ N_{nodes} $  \\
			\midrule
			16000 &0.0 &1.0E+20 &30 \\
			\bottomrule
		\end{tabular}		
	\end{table}
	where $ P_{\max} $ denotes the maximum pressure that could be reached during the simulation. $ \hat{P}_{\max} $, $ \hat{P}_{\min} $, and $ N_{nodes} $ denote maximum pressure to extend the range of pressures, minimum pressure to extend the lower range of pressures, and the number of nodes for checking total compressibility of the system.
	
	\begin{table}[h]
		\centering	
		\caption{PVDG: PVT properties of dry gas (no vaporized oil).
			left: two-phase, right: three-phase.}\label{tab:2P-PVDG}
		\begin{tabular}{ccc}
			\toprule
			$P_g$	&$B_g$  &$ \mu_{g} $ \\
			\midrule
			300     & 1.98  &0.0162 \\
			800     & 1.11  &0.0197 \\
			8000    & 0.60  &0.0330 \\
			&       &       \\
			\bottomrule
		\end{tabular}
		\qquad
		\begin{tabular}{ccc}
			\toprule
			$P_g$	&$B_g$  &$ \mu_{g} $ \\
			\midrule
			400    &  1.96 & 0.0140  \\
			4000   &  0.84 & 0.0160  \\
			8000   &  0.59 & 0.0175 \\
			10000  &  0.42 & 0.0195 \\
			\bottomrule
		\end{tabular}
	\end{table}
	
	where $P_g$, $B_g$, and $ \mu_{g} $ denote the pressure, volume coefficient, and viscosity coefficient of gas phase, respectively.

	\item Gas-oil relative permeabilities and capillary pressure:
	
	\begin{table}[h]
		\centering	
		\caption{SGOF: gas/oil relative permeabilities and capillary pressure versus gas saturation for three-phase.}\label{tab:SGOF}
		\begin{tabular}{cccc}
			\toprule
			$S_g$	&$\kappa_{rg}$  &$ \kappa_{rog} $  & $ p_{cog} $  \\
			\midrule
			0.00   &  0.0000     &  1.00    &    0.0  \\
			0.04   &  0.0000     &  0.60    &    0.2  \\
			0.10   &  0.0220     &  0.33    &    0.5  \\
			0.20   &  0.1000     &  0.10    &    1.0  \\
			0.30   &  0.2400     &  0.02    &    1.5  \\
			0.40   &  0.3400     &  0.00    &    2.0  \\
			0.50   &  0.4200     &  0.00    &    2.5  \\
			0.60   &  0.5000     &  0.00    &    3.0  \\
			0.70   &  0.8125     &  0.00    &    3.5  \\
			0.80   &  1.0000     &  0.00    &    3.9  \\    
			\bottomrule
		\end{tabular}
	\end{table}
	
	where $S_g$, $\kappa_{rg}$, $ \kappa_{rog} $, and $ p_{cog} $ denote the gas saturation, gas relative permeability, oil relative permeability (when oil, gas and connate water are present), and capillary pressure between the gas and oil phases, respectively.
\end{itemize}

Next we are going to verify convergence behavior and parallel performance of the PGS-SCM method for the three-phase SPE10 problem. Also, the correctness and parallel performance for the ASCPR-GMRES-OMP and ASCPR-GMRES-CUDA methods are tested and the performance of our and commercial simulators are compared.

\subsubsection{PGS-SCM method}
\begin{example}\label{ex:1}
	\rm{The three-phase example is considered, and the numerical simulation conducted for 100 days. We use the different number of threads ($ \mathrm{NT}=1,2,4,8$, and $16$) to test the parallel performance of CPR-GMRES-PGS-NO and CPR-GMRES-PGS-SCM. The impacts of the different strength thresholds $ \theta $ ($\theta=0,0.05,0.1$, and $0.3 $) on CPR-GMRES-PGS-SCM are also tested.}
\end{example}

\reftab{tab:MC-GS-3P} lists the experimental results of Example \ref{ex:1}, including the total number of linear iterations (\emph{Iter}), the total solution time (\emph{Time}), and the parallel speedup (\emph{Speedup}).
\begin{table}[H]
	\centering
	\setlength{\tabcolsep}{4.5pt} 
	\renewcommand\arraystretch{1.5} 
	\caption{Iter, Time(s), and Speedup of the two solvers with different NT for the three-phase SPE10 problem.}
	\label{tab:MC-GS-3P}
	\begin{tabular}{c|c|c|ccccc}
		\toprule
		Solvers & $\theta$ & NT  & 1& 2 & 4 & 8 & 16 \\
		\hline
		\multirow{3}{*}{CPR-GMRES-PGS-NO}
		&\multirow{3}{*}{---}	&\emph{Iter}	&3336			&3338			&3372			&3439 		&3527		\\
		&											&\emph{Time}		&3406.71 	&2050.70 	&1258.79 	&804.38 	&687.76 \\
		&											&\emph{Speedup} &1.00				&1.66   	&2.71 	  &4.24   	&4.95 	\\		
		\hline
		\multirow{12}{*}{CPR-GMRES-PGS-SCM}
		&\multirow{3}{*}{0}		&\emph{Iter}	&\textbf{3334}	&\textbf{3334}	&\textbf{3334}	&\textbf{3334}	&\textbf{3334} \\
		&											&\emph{Time}		&3489.93 	&1937.48 	&1125.65 	&723.21 	&608.78 \\
		&											&\emph{Speedup} &1.00 	&1.80 	&3.10 	&4.83 	&\textbf{5.73}  \\
		\cline{2-8}	
		&\multirow{3}{*}{0.05}&\emph{Iter}	&3236	&3236	&3237	&3237	&\textbf{3237} \\
		&											&\emph{Time}		&3446.89 	&1920.54 	&1102.48 	&711.16 	&\textbf{598.86} \\
		&											&\emph{Speedup} &1.00 	&1.79 	&3.13 	&4.85 	&\textbf{5.76} \\	
		\cline{2-8}	
		&\multirow{3}{*}{0.1} &\emph{Iter}	&3379	&3381	&3381	&3385	&3385 \\
		&											&\emph{Time}		&3551.89 	&1976.98 	&1141.65 	&745.85 	&624.98 \\
		&											&\emph{Speedup} &1.00 	&1.80 	&3.11 	&4.76 	&5.68 \\	
		\cline{2-8}	
		&\multirow{3}{*}{0.3}	&\emph{Iter}	&3314	&3315	&3470	&3496	&3477 \\
		&											&\emph{Time}		&3421.46	&1909.81	&1154.44	&756.08	  &631.75 \\
		&											&\emph{Speedup} &1.00 	&1.79	&2.96 	&4.53 	&5.42 \\				
		\bottomrule
	\end{tabular}
\end{table}

It can be seen from \reftab{tab:MC-GS-3P} that, for CPR-GMRES-PGS-NO solver, the total number of linear iterations gradually increases as NT increases. When $\mathrm{NT}=16$, the total number of linear iterations increases by 191 compared with the single-thread result, and the speedup is 4.95. This implies that the number of iterations of the parallel GS algorithm based on natural ordering is not stable. That is, as the number of threads increases, the number of iterations increases, which affects the parallel speedup of the solver. 
However, for CPR-GMRES-PGS-SCM solver (when $ \theta=0 $), the total number of linear iterations is not changed with a larger NT. Such results indicate that the proposed PGS-SCM yields the same convergence behavior as the corresponding single-thread algorithm, which verifies the effectiveness of the algorithm.  
Moreover, from the perspective of parallel performance, CPR-GMRES-PGS-SCM gets a higher speedup compared with CPR-GMRES-PGS-NO. For example, when $\mathrm{NT}=16$, the speedup of CPR-GMRES-PGS-SCM increases to 5.73 (from 4.95 to 5.73). Hence, the proposed PGS-SCM method can obtain a better parallel speedup.

Next, we discuss the influence of different strength thresholds $ \theta $ on the parallel performance for CPR-GMRES-PGS-SCM solver.
When $ \theta $ is small (for example, $ \theta=0$ or $0.05$), the total number of linear iterations changes little with the increase of NT and can even be considered unchanged. However, when $\theta$ is larger (for example, $\theta=0.1$ or $0.3 $), the varied range of the total number of linear iterations is enlarged with the increase of NT. These results show that as $ \theta $ increases, the independence of the degrees of freedom is decreased, which affects the number of iterations. If $\mathrm{NT}=16$ is considered, the proposed method obtains some meaningful results. As $ \theta $ increases, the total number of linear iterations  first decreases and then increases, and the speedup first increases and then decreases. Especially when $ \theta = 0.05$, the minimum total number of linear iterations is 3237, and the speedup is the highest, reaching 5.76. All in all, the strength threshold $ \theta $ affects the stability of the number of iterations and parallel performance. In this experiment, when $ \theta=0.05 $, the obtained number of iterations is the least, and the parallel speedup is the highest.

\subsubsection{ASCPR-GMRES-OMP method}
In order to verify the correctness and parallel performance of ASCPR-GMRES-OMP, we discuss the influences of different thresholds $\mu $ on the experimental results.

\begin{example}\label{ex:2}
	\rm{For the three-phase example, the numerical simulation is conducted for 100 days, and $ \theta=0.05 $. We explore the parallel performance of ASCPR-GMRES-OMP for five groups of $ \mu $ (i.e., $ \mu = 0, 10, 20, 30$, and $40$), when $\mathrm{NT} = 1, 2, 4, 8$, and $16$, respectively.}
\end{example}

First, to verify the correctness of ASCPR-GMRES-OMP, we present the field oil production rate, field gas production rate, and average pressure graphs of five groups $\mu $ (when $\mathrm{NT}=16$); see \reffig{fig:spe10-3p-Oil-Pre-omp}.

\begin{figure}[H]
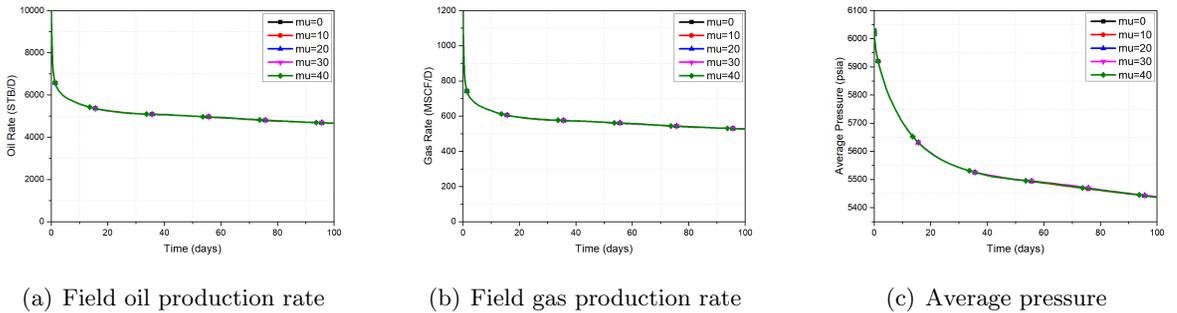

	\subfigure[Field oil production rate]{\label{fig:spe10-3p-Oil-Rate-nt16}
		\begin{minipage}[t]{0.33\linewidth}
			\centering
			\includegraphics[width=5.5cm]{SPE10-3P-oil-rate-omp-16}		
		\end{minipage}
	}%
	\subfigure[Field gas production rate]{\label{fig:spe10-3p-Gas-Rate-nt16}
		\begin{minipage}[t]{0.33\linewidth}
			\centering
			\includegraphics[width=5.5cm]{SPE10-3P-gas-rate-omp-16}		
		\end{minipage}
	}%
	\subfigure[Average pressure]{\label{fig:spe10-3p-Ave-Pre-nt16}
		\begin{minipage}[t]{0.33\linewidth}
			\centering
			\includegraphics[width=5.5cm]{SPE10-3P-ave-pressure-omp-16}	
		\end{minipage}
	}
	\caption{Comparison charts of field oil production rate, field gas production rate, and average pressure of five groups $ \mu $ for the three-phase SPE10 problem (OpenMP, NT=16).}\label{fig:spe10-3p-Oil-Pre-omp}
\end{figure}

From \reffig{fig:spe10-3p-Oil-Pre-omp}, we can see that the field oil production rate, field gas production rate, and average pressure obtained by the adaptive SETUP CPR preconditioner ($ \mu=10,20,30$, and $40$) and the general CPR preconditioner ($ \mu=0 $) completely coincide, indicating that the ASCPR-GMRES-OMP method is correct.

Then, we also discuss the impacts of different thresholds $\mu $ on the parallel speedup of ASCPR-GMRES-OMP. \reftab{tab:Time-Newton-Linear-SPE10-3P} lists the number of SETUP calls (\emph{SetupCalls}), the ratio of SETUP in the total solution time (\emph{SetupRatio}), the total number of linear iterations (\emph{Iter}), total solution time (\emph{Time}), new parallel speedup ($\text{\emph{Speedup}}^{*}$), and parallel speedup (\emph{Speedup}). 
\begin{table}[H]
	\centering
	\setlength{\tabcolsep}{12pt} 
	\renewcommand\arraystretch{1.2} 
	\caption{SetupCalls, SetupRatio, Iter, Time(s), $\text{Speedup}^{*}$, and Speedup of different $\mu$ and NT for the three-phase SPE10 problem.}
	\label{tab:Time-Newton-Linear-SPE10-3P}	
	\begin{tabular}{cccccccc}
		\toprule
		& $\mu$  & 1& 2 & 4 & 8 & 16\\
		\hline
		\multirow{5}{*}{\emph{SetupCalls}}  
		&0	&178	&178	&178	&178	&178	\\
		&10	&141	&141	&141	&141	&141	\\
		&20	&98	&98	&98	&98	&98	\\
		&30	&25	&25	&25	&25	&25	\\
		&40	&13	&13	&13	&13	&13	\\
		\hline
		\multirow{5}{*}{\emph{SetupRatio}}  
		&0 	&13.13\%	&16.51\%	&22.06\%	&30.68\%	&40.44\% \\
		&10	&12.04\%	&15.01\%	&19.88\%	&28.25\%	&38.75\% \\
		&20	&8.76\%		&10.73\%	&14.05\%	&20.74\%	&33.23\% \\
		&30	&5.33\%		& 6.23\%	& 8.11\%	&11.39\%	&25.06\% \\
		&40	&4.67\%		& 5.47\%	& 6.91\%	&10.53\%	&19.83\% \\
		\hline
		\multirow{5}{*}{\emph{Iter}}  
		&0	&3236&	3236&		3237&		3237&		3237\\
		&10	&3253&	3253&		3253&		3253&		3253\\
		&20	&3464&	3463& 	3460& 	3461& 	3460\\ 
		&30	&3891& 	3891& 	3890& 	3889& 	3996\\ 
		&40	&4111& 	4111& 	4118& 	4125& 	4106\\ 
		\hline
		\multirow{5}{*}{\emph{Time}}
		&0	&3446.89 	&1920.54 	&1102.48 	&711.16 	&598.86  \\
		&10	&3348.65 	&1853.37 	&1080.48 	&685.87 	&573.68  \\
		&\textbf{20}	&\textbf{3446.32} 	&\textbf{1885.40} 	&\textbf{1063.75} 	&\textbf{654.28} 	&\textbf{522.75}  \\
		&30	&3973.86 	&2145.97 	&1165.28 	&676.72 	&558.70  \\
		&40	&4229.11 	&2239.03 	&1210.80 	&717.30 	&582.31  \\
		\hline
		\multirow{5}{*}{$\text{\emph{Speedup}}^{*}$}  
		&0	&1.00 	&1.79 	&3.13 	&4.85 	&5.76  \\
		&10	&1.03 	&1.86 	&3.19 	&5.03 	&6.01  \\
		&\textbf{20}	&\textbf{1.00} 	&\textbf{1.83} 	&\textbf{3.24} 	&\textbf{5.27} 	&\textbf{6.59}  \\
		&30	&0.87 	&1.61 	&2.96 	&5.09 	&6.17  \\
		&40	&0.82 	&1.54 	&2.85 	&4.81 	&5.92  \\
		\hline
		\multirow{5}{*}{\emph{Speedup}}  
		&0	&1.00 	&1.79 	&3.13 	&4.85 	&5.76  \\
		&10	&1.00 	&1.81 	&3.10 	&4.88 	&5.84  \\
		&20	&1.00 	&1.83 	&3.24 	&5.27 	&6.59  \\
		&30	&1.00 	&1.85 	&3.41 	&5.87 	&7.11  \\
		&40	&1.00 	&1.89 	&3.49 	&5.90 	&7.26  \\
		\bottomrule
	\end{tabular}
\end{table}

According to the results of \reftab{tab:Time-Newton-Linear-SPE10-3P}, we first discuss the number of SETUP calls, the ratio of SETUP in the total solution time, and the total number of linear iterations. For the number of SETUP calls and the ratio of SETUP in the total solution time, when NT is fixed, both the number of SETUP calls and the ratio of SETUP in the total solution time decrease as $\mu$ increases. Especially when $\mu=40$, there are only 13 SETUP calls. When $\mu$ is fixed, the number of SETUP calls is not changed with the increase of NT, but the ratio of SETUP in the total solution time is gradually increased (this also reflects fact that the parallel speedup of SOLVE is higher than that of SETUP). 
For the total number of linear iterations, when NT is fixed, the total number of linear iterations is gradually increasing as $\mu$ increases. In short, these results indicate that the number of SETUP calls is significantly decreased with the increase of $\mu$, but the total number of linear iterations is also increased.

What is more, we discuss the total solution time and the new parallel speedup (only discuss the impacts of the change of $\mu$ on the results). Let's take NT=16 as an example. When $\mu$ increases by 0, 10, 20, 30, and 40 in turn, the total solution time is first decreased and then increased, and the new parallel speedup  is first increased and then decreased. Especially when $\mu=20$, the total solution time is 522.75 (s), and the corresponding the new parallel speedup is 6.59. Compared with the case of the general CPR preconditioner, the total solution time of ASCPR-GMRES-OMP is reduced from 598.86 (s) to 522.75 (s), and the new parallel speedup is increased from 5.76 to 6.59. These results show that the proposed method can further improve the parallel performance of the solver.

Finally, we discuss the changes in the parallel speedup. As $\mu$ increases, the parallel speedup is increased. Specifically, when $\mu=40$ and $ \mathrm{NT}=16 $, the parallel speedup reaches 7.26. These results are consistent with what we expected. However, our goal is not to pursue the highest parallel speedup but the highest new parallel speedup (or the shortest total solution time).

\subsubsection{ASCPR-GMRES-CUDA method}
In order to verify the correctness and parallel performance of ASCPR-GMRES-CUDA, we explore the influences of different thresholds $\mu $ on the experimental results.

\begin{example}\label{ex:4}
	\rm{For the three-phase example, the numerical simulation is conducted for 100 days, and $ \theta=0.05 $. We explore the impacts of $ \mu $ ($ \mu = 0, 10, 20, 30$, and $40$) on the parallel performance of ASCPR-GMRES-CUDA.}
\end{example}

To verify the correctness of ASCPR-GMRES-CUDA, we plot the field oil production rate, field gas production rate, and average pressure graphs of five sets $\mu$, as shown in \reffig{fig:spe10-3p-Oil-Pre-gpu}.
\begin{figure}[H]
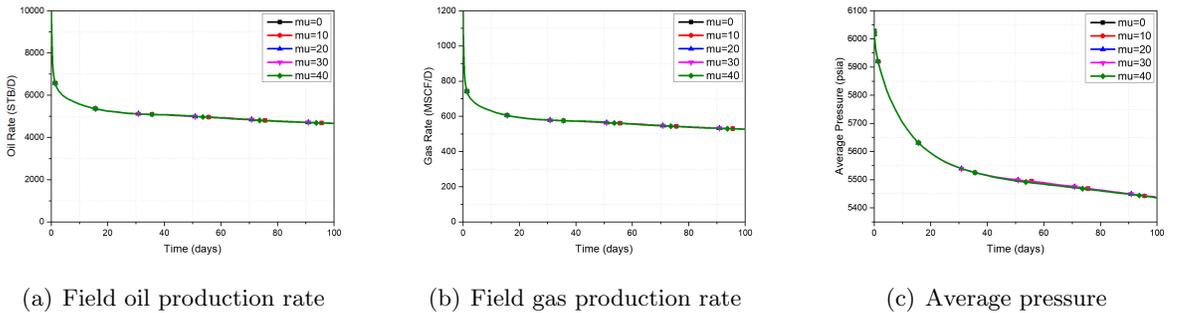

	\subfigure[Field oil production rate]{\label{fig:spe10-3p-Oil-Rate-gpu}
		\begin{minipage}[t]{0.33\linewidth}
			\centering
			\includegraphics[width=5.5cm]{SPE10-3P-oil-rate-gpu}		
		\end{minipage}
	}%
	\subfigure[Field gas production rate]{\label{fig:spe10-3p-Gas-Rate-gpu}
		\begin{minipage}[t]{0.33\linewidth}
			\centering
			\includegraphics[width=5.5cm]{SPE10-3P-gas-rate-gpu}		
		\end{minipage}
	}%
	\subfigure[Average pressure]{\label{fig:spe10-3p-Ave-Pre-gpu}
		\begin{minipage}[t]{0.33\linewidth}
			\centering
			\includegraphics[width=5.5cm]{SPE10-3P-ave-pressure-gpu}	
		\end{minipage}
	}
	\caption{The field oil production rate, field gas production rate, and average pressure comparison chart of five sets $\mu$ for the three-phase SPE10 problem (CUDA).}
	\label{fig:spe10-3p-Oil-Pre-gpu}
\end{figure}

From \reffig{fig:spe10-3p-Oil-Pre-gpu}, we can see that the field oil production rate, field gas production rate, and average pressure obtained by the adaptive SETUP CPR preconditioner (i.e., $ \mu=10, 20, 30$, and $40$) and the general CPR preconditioner (i.e., $ \mu=0 $) completely coincide, indicating that the ASCPR-GMRES-CUDA is correct.

In addition, we study the parallel performance of the ASCPR-GMRES-CUDA. \reftab{tab:GPU-results-3p} lists \emph{SetupCalls}, \emph{SetupRatio}, \emph{Iter}, \emph{Time}, and \emph{Speedup} of ASCPR-GMRES-CUDA. The single-thread calculation results of the OpenMP version program (denoted as ASCPR-GMRES-OMP(1)) are also added for comparison.
\begin{table}[H]
	\centering
	\setlength{\tabcolsep}{7.0pt} 
	\renewcommand\arraystretch{1.5} 
	\caption{SetupCalls, SetupRatio, Iter, Time(s), and Speedup (compared to ASCPR-GMRES-OMP(1)) of the different $\mu$ for the three-phase SPE10 problem.}
	\label{tab:GPU-results-3p}
	\begin{tabular}{c|c|c|c|c|c|c}
		\hline
		Solvers & $\mu$& \emph{SetupCalls}&	\emph{SetupRatio} & \emph{Iter}&	\emph{Time}&	\emph{Speedup} \\
		\hline
		ASCPR-GMRES-OMP(1) &0	  &178	& 13.13\% &3236	&3446.89 	&---  \\
		\hline
		\multirow{5}{*}{ASCPR-GMRES-CUDA}
		&0		&178	&74.01\%	&3270	&594.96 	& 5.79 \\
		&10		&146	&70.97\%	&3302	&534.33 	& 6.45 \\
		&20		&69		&61.20\%	&3424	&413.12 	& 8.34 \\
		&\textbf{30}		&\textbf{24}		&\textbf{46.24\%}	&\textbf{4065}	&\textbf{355.80} 	& \textbf{9.69} \\
		&40		&17		&44.21\%	&4239	&362.97  	& 9.50 \\
		\hline
	\end{tabular}
\end{table}

It can be seen from \reftab{tab:GPU-results-3p} that for ASCPR-GMRES-CUDA solver, as $ \mu $ increases, both the number of SETUP calls and the ratio of SETUP in the total solution time decreases, the total number of linear iterations gradually increases, and the parallel speedup first increases and then decreases. The CUDA version can be 5.79 times faster than the ASCPR-GMRES-OMP(1) (when $ \mu=0 $). In particular, when $ \mu=30 $, compared with $ \mu=0 $, the total solution time of ASCPR-GMRES-CUDA is reduced from 594.96 (s) to 355.80 (s), which can be 1.67 times faster. At the same time, compared with ASCPR-GMRES-OMP(1), the speedup of ASCPR-GMRES-CUDA reaches 9.69. Therefore, the proposed method obtains distinct acceleration effects and is more suitable for GPU architecture.

Finally, we present the effects of different $\mu$ on the timestep size $ \Delta t $ for ASCPR-GMRES-OMP and ASCPR-GMRES-CUDA solvers, see~\reffig{fig:DeltaT-SPE10-3P}. It can be seen from~\reffig{fig:DeltaT-SPE10-3P-OMP-MU} that when $\mu=0$, the timestep size is consistent for ASCPR-GMRES-OMP with different thread numbers. From~\reffigs{fig:DeltaT-SPE10-3P-OMP-NT} and~\ref{fig:DeltaT-SPE10-3P-GPU}, 
the change in the timestep sizes of the ASCPR-GMRES-OMP and ASCPR-GMRES-CUDA solvers can be ignored when $\mu$ changes.

\begin{figure}[H]
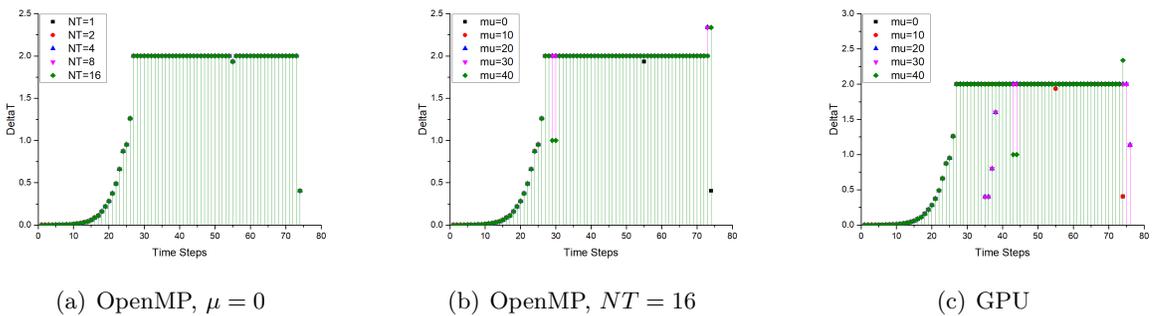

	\centering
	\subfigure[OpenMP, $ \mu=0 $]{\label{fig:DeltaT-SPE10-3P-OMP-MU}
		\begin{minipage}[t]{0.33\linewidth}
			\centering
			\includegraphics[width=5.5cm]{OMP-DeltaT-SPE10-3P-mu=0}
		\end{minipage}
	}%
	\subfigure[OpenMP, $ NT=16 $]{\label{fig:DeltaT-SPE10-3P-OMP-NT}
		\begin{minipage}[t]{0.33\linewidth}
			\centering
			\includegraphics[width=5.5cm]{OMP-DeltaT-SPE10-3P-NT=16}
		\end{minipage}
	}%
	\subfigure[GPU]{\label{fig:DeltaT-SPE10-3P-GPU}
		\begin{minipage}[t]{0.33\linewidth}
			\centering
			\includegraphics[width=5.5cm]{GPU-DeltaT-SPE10-3P}
		\end{minipage}
	}%
	\caption{The timestep size $ \Delta t $ of ASCPR-GMRES-OMP and ASCPR-GMRES-CUDA for the three-phase SPE10 problem.}\label{fig:DeltaT-SPE10-3P}
\end{figure}

\subsubsection{Performance comparisons with commercial simulator}
Also, we compare the experimental results of the OpenMP and GPU versions for commercial and our simulators, respectively. To begin with, \reffigs{fig:SPE10-3P-FOPR-vs-tNav} and \ref{fig:SPE10-3P-FPR-vs-tNav} display the field oil production rate, field gas production rate, and average pressure graphs. Through quantitative comparison, it can be found that our simulation results are very consistent with the results of commercial simulator.

\begin{figure}[H]
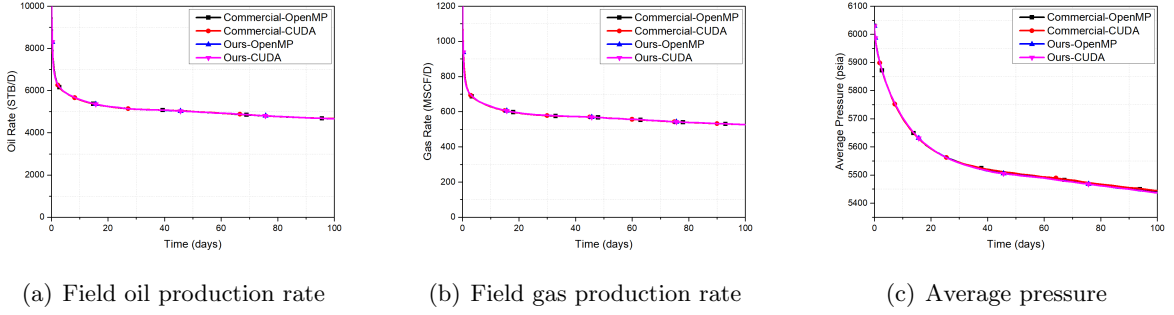

	\subfigure[Field oil production rate]{\label{fig:SPE10-3P-FOPR-vs-tNav}
		\begin{minipage}[t]{0.33\linewidth}
			\centering
			\includegraphics[width=5.5cm]{SPE10-3P-FOPR-vs-tNav}		
		\end{minipage}
	}%
	\subfigure[Field gas production rate]{\label{fig:SPE10-3P-FGPR-vs-tNav}
		\begin{minipage}[t]{0.33\linewidth}
			\centering
			\includegraphics[width=5.5cm]{SPE10-3P-FGPR-vs-tNav}		
		\end{minipage}
	}%
	\subfigure[Average pressure]{\label{fig:SPE10-3P-FPR-vs-tNav}
		\begin{minipage}[t]{0.33\linewidth}
			\centering
			\includegraphics[width=5.5cm]{SPE10-3P-FPR-vs-tNav}	
		\end{minipage}
	}
	\caption{The field oil production rate, field gas production rate, and average pressure comparison charts of commercial and our simulators for the three-phase SPE10 problem. OpenMP version: NT = 16, Ours: $ \mu=0 $.}
	\label{fig:SPE10-3P-FOPR-FPR-vs-tNav}
\end{figure}

{\small{
		\begin{table}[htpb]
				\centering
				\setlength{\tabcolsep}{12pt}
				\renewcommand\arraystretch{1.2}
				\caption{NumTSteps, NumNSteps, Iter, AvgIter, Time(h), and Speedup comparisons of the OpenMP version of commercial and our simulators for the three-phase SPE10 problem.}
				\label{tab:tNavigator-vs-HiSim-OMP-SPE10-3P}		
				\begin{tabular}{c|c|cccccc}
					\toprule
					Simulators & $\mu$ & NT & 1& 2 & 4 & 8 & 16\\
					\hline
					\multirow{6}{*}{Commercial} &\multirow{6}{*}{---}	
					&\emph{NumTSteps} &114	  &113		&115		&115		&114 		\\
					&&\emph{NumNSteps} &233  	&229		&232		&231		&230 		\\
					&&\emph{Iter}		  &60876	&64693	&67787	&70595	&73864 	\\
					&&\emph{AvgIter}    &261.3 	&282.5 	&292.2 	&305.6 	&321.1	\\	
					&&\emph{Time}     &7.626 	&4.031 	&2.125 	&1.108 	&\textbf{0.590} 	\\
					&&\emph{Speedup}  &1.00		&1.89 	&3.59 	&6.88 	&12.92 	\\
					\hline
					\multirow{30}{*}{Ours} &
					\multirow{6}{*}{0}	
					&\emph{NumTSteps} &73	&73	&73	&73	&73 \\
					&&\emph{NumNSteps} &178 	  &178 	  &178    &178 	  &178  \\
					&&\emph{Iter}		  &3236 	&3236 	&3237 	&3237 	&3237 \\
					&&\emph{AvgIter}    &18.2 	&18.2 	&18.2 	&18.2 	&18.2 \\
					&&\emph{Time}     &1.144 	&0.715 	&0.482 	&0.371 	&0.345\\
					&&\emph{Speedup}  &1.00 	&1.60 	&2.37 	&3.08 	&3.32 \\
					\cline{2-8}
					&\multirow{6}{*}{10}	
					&\emph{NumTSteps} &73	&73	&73	&73	&73 \\
					&&\emph{NumNSteps} &178 	  &178 	  &178    &178 	  &178  \\
					&&\emph{Iter}		  &3253 	&3253 	&3253 	&3253 	&3253 \\
					&&\emph{AvgIter}    &18.3 	&18.3 	&18.3 	&18.3 	&18.3 \\
					&&\emph{Time}     &1.115 	&0.692 	&0.475 	&0.363 	&0.337\\ 
					&&\emph{Speedup}  &1.00 	&1.61 	&2.35 	&3.07 	&3.31 \\
					\cline{2-8}
					&\multirow{6}{*}{20}	
					&\emph{NumTSteps} &73	&73	&73	&73	&73 \\
					&&\emph{NumNSteps} &178 	  &178 	  &178    &178 	  &178  \\
					&&\emph{Iter}		  &3464 	&3463 	&3460 	&3461 	&3460 \\
					&&\emph{AvgIter}   	&19.5 	&19.5 	&19.4 	&19.4 	&19.4 \\
					&&\emph{Time}    	&1.140	&0.699	&0.467	&0.354	&0.319 \\ 
					&&\emph{Speedup} 	&1.00 	&1.63 	&2.44 	&3.22 	&3.57  \\
					\cline{2-8}
					&\multirow{6}{*}{\textbf{30}}	
					&\emph{NumTSteps} &74	&74	&74	&73	&73 \\
					&&\emph{NumNSteps} &178	&178	&178	&178	&178 \\
					&&\emph{Iter}		  &3891 	&3891 	&3890 	&3889 	&3996  \\
					&&\emph{AvgIter}   	&21.9 	&21.9 	&21.9 	&21.8 	&22.5  \\
					&&\emph{Time}    	&1.288 	&0.774 	&0.503 	&0.358 	&\textbf{0.317} \\
					&&\emph{Speedup} 	&1.00 	&1.66 	&2.56 	&3.60 	&4.06  \\
					\cline{2-8}
					&\multirow{6}{*}{40}	
					&\emph{NumTSteps} &74		&74		&74		&74		&74     \\
					&&\emph{NumNSteps} &178		&178	&178	&178	&178    \\
					&&\emph{Iter}		  &4111 	&4111 	&4118 	&4125 	&4105   \\
					&&\emph{AvgIter}   	&23.1 	&23.1 	&23.1 	&23.2 	&23.1   \\
					&&\emph{Time}    	&1.245 	&0.749 	&0.488 	&0.360 	&0.324  \\
					&&\emph{Speedup} 	&1.00 	&1.66 	&2.55 	&3.46 	&3.84   \\
					\bottomrule
				\end{tabular}
		\end{table}
}}

Next, \reftabs{tab:tNavigator-vs-HiSim-OMP-SPE10-3P} and \ref{tab:tNavigator-vs-HiSim-GPU-SPE10-3P} present the number of time steps (\emph{NumTSteps}), the number of Newton iterations (\emph{NumNSteps}), the number of linear iterations (\emph{Iter}), the average number of linear iterations per Newton iteration (\emph{AvgIter}), the total simulation time (\emph{Time}), and the parallel speedup (\emph{Speedup}) for the OpenMP and GPU versions of commercial and our simulators, respectively. 

It can be seen from~\reftab{tab:tNavigator-vs-HiSim-OMP-SPE10-3P} that there is more the number of time steps, Newton iterations, and linear iterations in commercial simulator. When the number of threads increases from 1 to 16, linear iterations increase dramatically (or the average number of linear iterations per Newton iteration increases), and the simulation time is reduced from 7.626 (h) to 0.590 (h). At this point, the maximum speedup reaches 12.92 and the average number of linear iterations per Newton iteration exceeds 260. In our simulator, the number of time steps, Newton iterations, and linear iterations are less. Note that the number of linear iterations is basically stable as the number of threads increases from 1 to 16. When $\mu=30$ and the number of threads varied from 1 to 16, the simulation time was reduced from 1.288 (h) to 0.317 (h). At this point, the maximum speedup reaches 4.06 and the average number of linear iterations per Newton iteration is about 22. This indicates that the proposed method can speed up the simulation time by 1.86 times compared to commercial simulator.

\begin{table}[h]
	\centering
	\setlength{\tabcolsep}{8pt} 
	\renewcommand\arraystretch{1.5} 
	\caption{NumTSteps, NumNSteps, Iter, AvgIter, Time(h), and Speedup (compared with the commercial simulator) comparisons of the GPU version of commercial and our simulators for the three-phase SPE10 problem.}\label{tab:tNavigator-vs-HiSim-GPU-SPE10-3P}	
	\begin{tabular}{c|c|c|c|c|c|c|c}
		\hline
		Simulators	&$\mu$ &\emph{NumTSteps}&\emph{NumNSteps}&\emph{Iter}		&\emph{AvgIter}	&\emph{Time} &\emph{Speedup} \\
		\hline
		Commercial	&---	&117& 233& 71415	 &306.5 &1.004 	&---      \\
		\hline
		\multirow{5}{*}{Ours}
		&0	&74 &178 &3270 &18.4 &0.339 &2.96  \\
		&10	&74 &178 &3302 &18.6 &0.317 &3.17  \\
		&20	&76 &178 &3424 &19.2 &0.282 &3.56  \\
		&\textbf{30}	&76 &178 &4065 &22.8 &\textbf{0.260} &\textbf{3.86}  \\ 
		&40	&74 &178 &4239 &23.8 &0.263 &3.81  \\
		\hline
	\end{tabular}
\end{table}

According to~\reftab{tab:tNavigator-vs-HiSim-GPU-SPE10-3P}, we can summarize similar conclusions. Compared with the commercial simulator, our obtained number of time steps, Newton iterations, and linear iterations is smaller, and the simulation time is shorter. Especially when $\mu=30$, the speedup of our simulator can achieve 3.86 compared to commercial simulator.

\section{Conclusions}\label{sec:7}
In this paper, we investigated an efficient parallel CPR preconditioner for the linear algebraic systems arising from the fully implicit discretization of the black oil model. First, for two difficulties of the preconditioner, the computation cost is large, and the parallel speedup is low in SETUP. We proposed an adaptive SETUP CPR preconditioner to improve the efficiency and parallel performance of the preconditioner. Furthermore, we proposed an efficient parallel GS algorithm based on the coefficient matrix of strong connections. The algorithm can be adapted to unstructured grids, yielded the same convergence behavior as the corresponding single-thread algorithm, and obtained a good parallel speedup. This paper took the CPR preconditioner as an example to illustrate our proposed methods, which can easily be extended to multi-stage preconditioners. In the future, the parallel performance of the adaptive SETUP multi-stage preconditioners needs to be further improved. This paper only considered OpenMP and CUDA implementations for the proposed parallel GS algorithm, so further research will be conducted for MPI parallelism.

\section*{Acknowledgments}
\addcontentsline{toc}{section}{Acknowledgments}
This work was supported by the Postgraduate Scientific Research Innovation Project of Hunan Province (No.~CX20210607) and Postgraduate Scientific Research Innovation Project of Xiangtan University (No.~XDCX2021B110). Feng was partially supported by the Excellent Youth Foundation of SINOPEC (No.~P20009). Zhang was partially supported by the National Science Foundation of China (No.~11971472). Shu was partially supported by the National Science Foundation of China (No.~11971414). We would like to thank Prof. Ruizhong Jiang (China University of Petroleum) for providing numerical results by tNavigator.

\section*{Code availability section}

\begin{itemize}
	\item Name of code: faspsolver, faspcpr
	\item Program language : the code is written in C
	\item Repository: \url{https://github.com/FaspDevTeam/faspsolver}
	\item Repository: \url{https://github.com/zhaoli0321/faspcpr}
	\item Prerequisites: available at the repository
	\item Description: available at the repository
\end{itemize}

\bibliographystyle{unsrt}
\bibliography{bibliography}
\addcontentsline{toc}{section}{References}

\end{document}